\newcommand{\zed}{\mathbb{Z}}
\newcommand{\Q}{\mathbb{Q}}
\newcommand{\bn}[2]{\genfrac{(}{)}{0pt}{}{#1}{#2}}
\newcommand{\id}{\mathrm{id}}
\newcommand{\gdim}{\mathrm{gdim}}
\newcommand{\slmf}{\mathfrak{sl}}
\theoremstyle{plain}
\newtheorem{theorem}{Theorem}[section]
\newtheorem{lemma}[theorem]{Lemma}
\newtheorem{corollary}[theorem]{Corollary}
\theoremstyle{definition}
\newtheorem{definition}[theorem]{Definition}
\newtheorem{example}[theorem]{Example}
\theoremstyle{remark}
\newtheorem{remark}[theorem]{Remark}
\numberwithin{equation}{section}
\begin{document}

\title{HOMFLYPT Homology over $\mathbb{Z}_2$ Detects Unlinks}

\author{Hao Wu}

\thanks{The author was partially supported by NSF grant DMS-1205879.}

\address{Department of Mathematics, The George Washington University, Phillips Hall, Room 739, 801 22nd Street NW, Washington DC 20052, USA. Telephone: 1-202-994-0653, Fax: 1-202-994-6760}

\email{haowu@gwu.edu}

\subjclass[2010]{Primary 57M27}

\keywords{HOMFLYPT homology, Rasmussen spectral sequence, Khovanov homology, unknot, unlink} 

\begin{abstract}
We apply the Rasmussen spectral sequence to prove that the $\mathbb{Z}^3$-graded vector space structure of the HOMFLYPT homology over $\mathbb{Z}_2$ detects unlinks. Our proof relies on a theorem of Batson and Seed stating that the $\mathbb{Z}^2$-graded vector space structure of the Khovanov homology over $\mathbb{Z}_2$ detects unlinks. 
\end{abstract}

\maketitle

\section{Introduction}\label{sec-intro}

If the Jones polynomial determines certain properties of links, then so does the HOMFLYPT polynomial. This is because the latter specializes to the former. A reasonable question is whether the same is true for categorifications of these link polynomials. The answer is far less clear since there is no simple ``specialization" from the HOMFLYPT homology to the Khovanov homology. Instead, we have the Rasmussen spectral sequence. In this manuscript, we study a particular problem of this type: unlink detection. 

Kronheimer and Mrowka proved in \cite{Kronheimer-Mrowka-khovanov-unknot} that the vector space structure of the Khovanov homology over $\zed_2=\zed/2\zed$ detects the unknot. Later, Hedden and Ni proved in \cite{Hedden-Ni-khovanov-module} that the module structure of the Khovanov homology over $\zed_2$ detects unlinks. Improving these results,  Batson and Seed proved in \cite{Batson-Seed} that the $\zed^2$-graded vector space structure of the Khovanov homology over $\zed_2$ detects unlinks.

Upon closer inspection of the Rasmussen spectral sequence, we found that, if the HOMFLYPT homology of a link is isomorphic to that of an unlink, then its Khovanov homology is also isomorphic to that of an unlink. In particular, the HOMFLYPT homology over $\zed_2$ detects unlinks. The following is our main result.

\begin{theorem}\label{thm-unlinks}
Suppose that $B$ is a closed braid of $m$ components. Let $R$ be the polynomial ring over $\zed_2$ generated by $m$ variables, graded so that each variable is homogeneous of degree $2$. Then the following two statements are equivalent.
\begin{enumerate}
	\item $B$ is Markov equivalent to the $m$-component unlink.
	\item As $\zed$-graded vector spaces over $\zed_2$, the unreduced HOMFLYPT homology $H(B)$ of $B$ over $\zed_2$ is given by 
	\[
	H^{i,j}(B) \cong \begin{cases}
	R^{\oplus \bn{m}{l}}\{2l\}_x & \text{if }i=m-2l, ~0\leq l \leq m, \text{ and } j=-m, \\
	0 & \text{otherwise,}
	\end{cases}
	\]
	where $H^{i,j}(B)$ is the subspace of $H(B)$ with horizontal grading $i$ and vertical grading $j$, and $\{s\}_x$ means shifting the $x$-grading by $s$.
\end{enumerate}
\end{theorem}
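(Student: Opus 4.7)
The plan is to prove $(1)\Rightarrow(2)$ by a direct calculation and $(2)\Rightarrow(1)$ by combining the Rasmussen spectral sequence with the Batson--Seed theorem. Both directions hinge on the sparsity of the target HOMFLYPT homology in the vertical direction.

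For $(1)\Rightarrow(2)$, I would compute the unreduced HOMFLYPT homology of a single unknot over $\zed_2$ directly; it is a free $\zed_2[x]$-module of rank two, concentrated in vertical grading $-1$, with the two generators in horizontal gradings $\pm 1$. A K\"unneth-type argument identifies the HOMFLYPT homology of the $m$-component unlink with the $m$-fold tensor product of the unknot homology, and a binomial expansion delivers the graded-vector-space formula in (2).

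For $(2)\Rightarrow(1)$, I would invoke the Rasmussen spectral sequence from $H(B)$ over $\zed_2$ to the Khovanov homology of $B$ over $\zed_2$ (the $N=2$ case), whose differentials have nontrivial vertical degree. Under hypothesis (2), $H(B)$ lies entirely in vertical grading $-m$, so on purely grading grounds every differential in the spectral sequence is forced to vanish, and it collapses at $E_1$. Consequently the Khovanov homology of $B$ is obtained from $H(B)$ by the standard grading collapse that converts the HOMFLYPT triple grading into the Khovanov bigrading. Applying this collapse to both sides of the isomorphism asserted in (2) shows that $B$ has the same Khovanov homology over $\zed_2$ as the $m$-component unlink, as $\zed^2$-graded vector spaces, and the Batson--Seed theorem then implies that $B$ is Markov equivalent to the $m$-component unlink.

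The main obstacle I anticipate is the grading bookkeeping justifying the final step: to feed the conclusion into Batson--Seed, the collapse from the HOMFLYPT triple grading to the Khovanov bigrading must be tracked carefully enough to produce matching $\zed^2$-bigraded dimensions, not merely matching total dimensions. The sparse distribution of $H(B)$ along the vertical axis is precisely what makes this tractable, since once the spectral sequence is forced to collapse the resulting Khovanov homology is read off from $H(B)$ by an explicit identification of gradings rather than by a further cohomological computation.
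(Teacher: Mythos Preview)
Your $(1)\Rightarrow(2)$ is fine and matches the paper. The gap is in $(2)\Rightarrow(1)$: you assert that the Rasmussen spectral sequence collapses at $E_1$ because its differentials ``have nontrivial vertical degree'' and $H(B)$ is concentrated in a single vertical grading. That is false for $d_1$. In the column-filtration spectral sequence of the double complex $(H(C(B),d_+),d_-,d_v)$ one has $E_1\cong H(B)$ with $d_1$ induced by $d_-$, and $d_-$ \emph{preserves} the vertical grading; only $d_k$ for $k\geq 2$ moves vertically. Indeed, for the unlink itself $d_1$ is the Koszul differential for the regular sequence $x_1^2,\dots,x_m^2$ and is certainly nonzero --- it has to be, since $E_1$ is infinite-dimensional over $\zed_2$ while Khovanov homology is not. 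So your proposed collapse at $E_1$ would make $H_2(B)$ infinite-dimensional.

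The real work, which your outline skips, is computing $E_2=H(H(B),d_-)$. Hypothesis (2) only fixes the graded $\zed_2$-vector-space structure of each $H^{i,j}(B)$; it tells you neither the $R_B$-module structure nor the map $d_-$, so $E_2$ is not determined by (2) alone. The paper supplies two extra ingredients. First, a chain-level construction (independent of (2)) produces for each component a homotopy $h_i$ on $H(B)$ with $d_-h_i+h_id_-=x_i^2\cdot\id_{H(B)}$. Second, an algebraic lemma: any bounded complex of graded $R_B$-modules whose terms have the graded dimensions in (2) and which carries such homotopies for all $m$ variables is forced to have homology $\zed_2[x_1,\dots,x_m]/(x_1^2,\dots,x_m^2)$, concentrated in the bottom horizontal degree. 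Once $E_2$ is thus pinned down and seen to live in a single vertical grading, your argument correctly kills $d_k$ for $k\geq 2$, and then Batson--Seed finishes the proof.
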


The proof of Theorem \ref{thm-unlinks} in Section \ref{sec-detection-unlinks} is itself quite short. But to understand this proof, one needs to know some basic properties of the HOMFLYPT homology and the Rasmussen spectral sequence connecting it to the Khovanov homology. So, for the convenience of the reader, we include a brief review of these subjects in Appendix \ref{sec-HOMFLYPT}. We need to point out that the normalization of $H(B)$ in the current paper is given in Appendix \ref{sec-HOMFLYPT}, which is different from the ones in \cite{KR2,Ras2}.

Also, there is a slight refinement of Theorem \ref{thm-unlinks}, in which one does not assume $B$ has exactly $m$ components to start with. This is because, according to Theorem \ref{thm-hilbert} below, the $\zed^3$-graded vector space structure of $H(B)$ actually determines the number of components of $B$. We will state this refinement in Corollary \ref{cor-unlinks} below.

\section{Detection of Unlinks}\label{sec-detection-unlinks}

We prove Theorem \ref{thm-unlinks} in this section. Let us establish some technical lemmas first.

\begin{lemma}\label{lemma-x-square-trivial}
Let $B$ be a closed braid of $m$ components with a marking, in which $x_i$ is the variable assigned to a marked point on the $i$-th component for $i=1,\dots,m$. Then there is an obvious identification ${R_B \cong\zed_2[x_1,\dots,x_m]}$.\footnote{See Lemma \ref{lemma-quotient-base-ring} below.} For $i=1,\dots,m$, there is a homogeneous $R_B$-module homomorphism $H(B) \xrightarrow{h_i} H(B)$ that preserves the vertical grading, raises the horizontal grading by $2$, lowers the $x$-grading by $2$, and satisfies that $d_-\circ h_i + h_i \circ d_- = x_i^2 \cdot \id_{H(B)}$. 
\end{lemma}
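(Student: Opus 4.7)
The plan is to construct $h_i$ at the chain-complex level underlying $H(B)$ and then show it descends to $H(B)$ with the claimed properties. The guiding heuristic is that $d_-$ comes from a matrix factorization whose total potential, over $\zed_2$, collects a contribution $x_j^2$ at each marked point; globally on a closed braid this sum is zero (so $d_-$ is a genuine differential on $H(B)$), but individually one expects each $x_i^2$ to be null-homotopic with respect to $d_-$, and $h_i$ is exactly the witnessing homotopy.

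First I would recall from Appendix~\ref{sec-HOMFLYPT} the local matrix factorization models assigned to the crossings and edges of $B$ and the explicit chain-level lift of $d_-$. Near the marked point carrying $x_i$, the local piece of this lift is a Koszul-type expression linear in $x_i$. Using this, define $h_i$ chain-wise by an explicit local formula at the $i$-th marked point, extended by the identity on all other tensor factors. The stated grading shifts (horizontal $+2$, vertical $0$, $x$-grading $-2$) are then read off from the degrees of the variables and Koszul generators appearing in the local formula; in particular, the $-2$ shift in $x$-grading reflects that $h_i$ is built from a dual of the local $x_i$-linear piece of $d_-$.

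Next I would verify the homotopy equation $d_-\circ h_i + h_i\circ d_- = x_i^2\cdot\id$ by a purely local calculation. The parts of $d_-$ supported away from the $i$-th marked point commute with the local operator $h_i$, so those contributions cancel in characteristic two; the remaining local piece of $d_-$ at the $i$-th marked point, composed with $h_i$ in both orders, produces exactly $x_i^2$. Finally, I would pass to $H(B)$ by checking that $h_i$ is a chain map with respect to (and commutes up to homotopy with) the other differential used to define the HOMFLYPT homology, so that it induces a well-defined $R_B$-linear operator on $H(B)$ satisfying the desired relation.

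The main obstacle I expect is the descent step rather than the local construction: the explicit formula for $h_i$ is transparent at the matrix factorization level, but $H(B)$ sits several steps removed from it (through taking homology with respect to another differential and tracking the induced tri-grading), so one must carefully check that $h_i$ is compatible with all intermediate differentials and that the stated grading shifts survive. The hypothesis that the ground ring is $\zed_2$ is essential in two places: it is what makes the potential of $d_-$ equal to $\sum_j x_j^2$ in the first place, and it underlies the cancellations of cross terms in the local computation of $d_-\circ h_i + h_i\circ d_-$.
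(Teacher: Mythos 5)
Your overall strategy---define $h_i$ locally on one tensor factor of $\hat{C}(B)$, extend by the identity on the other factors, check compatibility with $d_+$ and $d_v$, and descend to $H(B)$---is the same as the paper's. However, there is a genuine gap in the key local computation. The tensor factors of $\hat{C}(B)$ are attached to the arcs and crossings between marked points, not to the marked points themselves, and their potentials have $x$-degree $6$: for the arc with endpoint variables $x_i,y_i$ the potential is $x_i^3+y_i^3=(x_i+y_i)(x_i^2+x_iy_i+y_i^2)$ over $\zed_2$, not a sum of squares $x_j^2$ as your heuristic asserts. As a result, no local factor admits a chain-level homotopy whose anticommutator with $d_-$ is exactly $x_i^2\cdot\id$. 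With the natural choice (reversing the $d_+$-arrow of the arc factor by the identity map, which indeed preserves the vertical grading, raises the horizontal grading by $2$ and lowers the $x$-grading by $2$), one obtains $d_-\circ h_i+h_i\circ d_-=(x_i^2+x_iy_i+y_i^2)\cdot\id$, where $y_i$ is the variable at the other end of that arc. Your claim that the local composition ``produces exactly $x_i^2$'' is therefore false at the chain level, and the argument as written does not yield the stated identity.

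The missing idea is the passage from $x_i^2+x_iy_i+y_i^2$ to $x_i^2$, which only happens after descending to homology: by Lemma \ref{lemma-quotient-base-ring}, $x_i$ and $y_i$ (marked points on the same component of $B$) act identically on $H(B)$, so over $\zed_2$ the operator $x_i^2+x_iy_i+y_i^2$ acts as $3x_i^2=x_i^2$. One also needs to arrange (by adding a marked point, using independence of the marking) that $x_i$ has an adjacent marked point $y_i$ on the same component joined by a crossing-free segment, so that the factor carrying the homotopy really is an arc factor; if the marked point sits between two crossings there is no such explicit local formula. Finally, the descent step you flagged as the main difficulty is in fact the easy part: the local $h_i$ commutes on the nose with the local $d_+$ (for horizontal-degree reasons) and with $d_v$ (which vanishes on an arc factor), so after tensoring with identities as in Definition \ref{def-complex-tangle} it strictly commutes with both differentials of the double complex and immediately induces an $R_B$-linear endomorphism of $H(B)=H(H(C(B),d_+),d_v)$ with the required grading behavior.
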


\begin{proof}
With out loss of generality, we assume there is a marked point on the $i$-th component $B$ adjacent to the marked point to which $x_i$ is assigned. That is, there is a segment $A$ in this component connecting these marked points with no crossings or other marked points in its interior. Say $y_i$ is the variable assigned to this adjacent marked points. Then the unnormalized HOMFLYPT chain complex of matrix factorizations associated to this segment $A$ is
\[
 \xymatrix{
 && 0 && 0 && \\
\hat{C}(A) = 0 \ar[rr]<0.5ex> && \ar[ll]<0.5ex> \zed_2[x_i,y_i]\{2\}_x \ar[rr]<0.5ex>^{x_i+y_i}\ar[u] && \zed_2[x_i,y_i]  \ar[ll]<0.5ex>^{x_i^2+x_iy_i+y_i^2} \ar[rr]<0.5ex> \ar[u] && \ar[ll]<0.5ex>0 .\\
&& 0 \ar[u] && 0 \ar[u] &&
}
\]
Define a homogeneous $\zed_2[x_i,y_i]$-homomorphism $h_i: \hat{C}(A) \rightarrow \hat{C}(A)$ by the diagram

\[
 \xymatrix{
 && 0 && 0 && \\
0 \ar[rr]<0.5ex>  && \ar[ll]<0.5ex> \zed_2[x_i,y_i]\{2\}_x \ar[rr]<0.5ex>^{x_i+y_i}\ar[u]  && \zed_2[x_i,y_i]  \ar[ll]<0.5ex>^{x_i^2+x_iy_i+y_i^2} \ar[rr]<0.5ex> \ar[u]  && \ar[ll]<0.5ex>0 \\
&& 0 \ar[u] && 0 \ar[u] && \\ 
 && 0 && 0 && \\
0 \ar[rr]<0.5ex> \ar[uuurr] && \ar[ll]<0.5ex> \zed_2[x_i,y_i]\{2\}_x  \ar[uuurr]^1 \ar[rr]<0.5ex>^{x_i+y_i}\ar[u] && \zed_2[x_i,y_i]  \ar[ll]<0.5ex>^{x_i^2+x_iy_i+y_i^2} \ar[rr]<0.5ex> \ar[u]  \ar[uuurr] && \ar[ll]<0.5ex>0 .\\
&& 0 \ar[u] && 0 \ar[u] &&
}
\]
Clearly, $h_i$ satisfies the grading conditions in the lemma and commutes with both differentials $d_+$ and $d_v$. Moreover,  
\begin{equation}\label{eq-homotopy-local}
d_-\circ h_i + h_i \circ d_- = (x_i^2+x_iy_i+y_i^2) \cdot \id_{\hat{C}(A)}.
\end{equation}

Denote by $R$ the polynomial ring over $\zed_2$ generated by all the variables used in the marking of $B$. Recall that, in Definition \ref{def-complex-tangle}, $\hat{C}(B)$ is defined by a big tensor product. Now we tensor $h_i$ with the identity maps of all the other factors in that big tensor product. This extends $h_i$ to an $R$-linear endomorphism of double complex $(\hat{C}(B), d_+,d_v) \xrightarrow{h_i} (\hat{C}(B), d_+,d_v)$ satisfying the grading requirements in the lemma. After the necessary grading shift, we get an $R$-linear endomorphism of double complex $(C(B), d_+,d_v) \xrightarrow{h_i} (C(B), d_+,d_v)$ satisfying the grading requirements in the lemma. Recall that $H(B)=H(H(C(B),d_+),d_v)$. Since $h_i$ commutes with both differentials of $(C(B), d_+,d_v)$, it induces a homogeneous $R$-module endomorphism $H(B) \xrightarrow{h_i} H(B)$ satisfying the grading requirements in the lemma. Since $R_B$ is a quotient ring of $R$ and the $R$-action on $H(B)$ factors through $R_B$, $H(B) \xrightarrow{h_i} H(B)$ is also an $R_B$-module homomorphism. Finally, by Lemma \ref{lemma-quotient-base-ring}, the multiplications of $x_i$ and $y_i$ are the same on $H(B)$. Thus, by equation \eqref{eq-homotopy-local}, $d_-\circ h_i + h_i \circ d_- = x_i^2 \cdot \id_{H(B)}$.
\end{proof}

A key ingredient of the proof of Theorem \ref{thm-unlinks} is the following algebraic lemma.

\begin{lemma}\label{lemma-homology-determination}
Let $\mathbb{F}$ be a field and $R=\mathbb{F}[t_1,t_2,\dots,t_m]$ the polynomial ring with $m$ variables over $\mathbb{F}$ graded so that each $t_i$ is homogeneous of degree $1$. Fix a positive integer $N$. For $0 \leq k \leq m$, denote by $M_k$ the graded $R$-module $M_k:= \mathbb{F}[t_1,t_2,\dots,t_m]/(t_{k+1}^N,\dots,t_m^N)$. Assume that the chain complex 
\[
(C_\ast,d)= 0 \rightarrow C_m \xrightarrow{d_m} C_{m-1} \xrightarrow{d_{m-1}} \cdots \xrightarrow{d_2} C_1 \xrightarrow{d_1} C_0 \rightarrow 0
\]
satisfies:
\begin{enumerate}
  \item $C_j$ is a graded $R$-module for $j=0,1,\dots,m$, 
  \item $C_j \cong M_k^{\oplus \bn{m}{j}}\{jN\}$ as graded $\mathbb{F}$-spaces for $j=0,1,\dots,m$, where $\{s\}$ means shifting the module grading up by $s$,
	\item $d_j$ is a homogeneous $R$-module homomorphism of degree $0$ for $j=1,\dots,m$,
	\item for each $i=1,\dots,k$, there is an $R$-module homomorphism $C_\ast \xrightarrow{h_i} C_{\ast+1}$ such that 
	\[
	d\circ h_i + h_i \circ d = t_i^N \cdot \id_{C_\ast}.
	\]
\end{enumerate}
Then 
\begin{equation}\label{eq-induction-k}
H_j(C_\ast, d) \cong 0 \text{ if } j>m-k.
\end{equation}
In particulart, if $k=m$, then
\begin{equation}\label{eq-induction-k=m}
H_j(C_\ast, d) \cong \begin{cases}
M_0 & \text{as graded } \mathbb{F}\text{-spaces if } j=0,\\
0 & \text{if } j \geq 1,
\end{cases}
\end{equation}
where $M_0$ is the graded $R$-module $M_0:=\mathbb{F}[t_1,t_2,\dots,t_m]/(t_{1}^N,t_{2}^N,\dots,t_m^N)$.
\end{lemma}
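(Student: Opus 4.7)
The plan is to induct on $k$. The base case $k=0$ is vacuous because $C_j = 0$ for $j > m$. For the inductive step, I will use the null-homotopy $h_k$ to set up a short exact sequence of chain complexes that reduces the problem to one to which the inductive hypothesis applies.

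Concretely, the relation $d\circ h_k + h_k\circ d = t_k^N\cdot\id_{C_\ast}$ shows that multiplication by $t_k^N$ is null-homotopic on $C_\ast$, so $t_k^N$ annihilates $H_\ast(C_\ast)$. Using the natural $R$-module realization of $C_j$ as $M_k^{\oplus \bn{m}{j}}\{jN\}$, multiplication by $t_k^N$ is also injective on each $C_j$, since $M_k$ is free as an $\mathbb{F}[t_k]$-module. This produces a short exact sequence of complexes
\[
0\to C_\ast\{N\}\xrightarrow{\,t_k^N\,} C_\ast\to \bar C_\ast\to 0,
\]
where $\bar C_\ast := C_\ast/t_k^N C_\ast$. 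Because $M_k/t_k^N M_k\cong M_{k-1}$, each term satisfies $\bar C_j\cong M_{k-1}^{\oplus \bn{m}{j}}\{jN\}$. The $R$-linearity of $d$ and of $h_1,\dots,h_{k-1}$ ensures they descend to a differential $\bar d$ and homotopies $\bar h_1,\dots,\bar h_{k-1}$ satisfying $\bar d\circ \bar h_i + \bar h_i\circ \bar d = t_i^N\cdot\id_{\bar C_\ast}$ for $i = 1,\dots,k-1$, so $\bar C_\ast$ satisfies the hypotheses of the lemma with parameter $k-1$.

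Next, I pass to homology. The long exact sequence degenerates, because $t_k^N$ acts as zero on $H_\ast(C_\ast)$, to short exact sequences
\[
0\to H_j(C_\ast)\to H_j(\bar C_\ast)\to H_{j-1}(C_\ast)\{N\}\to 0.
\]
Applying the inductive hypothesis to $\bar C_\ast$ yields $H_j(\bar C_\ast) = 0$ for $j > m-k+1$. Through the short exact sequences this forces both $H_j(C_\ast) = 0$ and $H_{j-1}(C_\ast) = 0$ whenever $j > m-k+1$; combining these gives $H_j(C_\ast) = 0$ for $j > m-k$, completing the induction. The sharper statement for $k = m$ then follows at once from Euler characteristic: the vanishing of $H_j(C_\ast)$ in positive degrees reduces $\dim_q H_0(C_\ast)$ to $\sum_j (-1)^j \dim_q C_j$, and a direct Hilbert-series computation identifies this with $\dim_q M_0$.

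The step I expect to be most delicate is promoting hypothesis (2) from a graded $\mathbb{F}$-space isomorphism to a graded $R$-module statement, since my argument needs both the injectivity of $t_k^N$ on each $C_j$ and the identification $\bar C_j\cong M_{k-1}^{\oplus \bn{m}{j}}\{jN\}$. Either hypothesis (2) should be strengthened accordingly, or one must invoke the $R$-linearity of $d$ and the $h_i$ together with the Hilbert-series constraints to force this structure. Once this is secured, the rest of the argument is routine homological algebra.
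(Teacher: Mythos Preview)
Your approach is essentially identical to the paper's: induct on $k$, use the homotopy $h_k$ to make $t_k^N$ act as zero on homology, pass to the quotient $C_\ast/t_k^N C_\ast$, break the resulting long exact sequence into short ones, apply the inductive hypothesis, and finish the case $k=m$ with an Euler-characteristic (Hilbert-series) computation. The delicate point you flag---that hypothesis~(2) is only a graded $\mathbb{F}$-space isomorphism, so injectivity of $t_k^N$ on $C_j$ and the identification $\bar C_j\cong M_{k-1}^{\oplus\binom{m}{j}}\{jN\}$ are not automatic---is glossed over in the paper's proof as well (it simply asserts the short exact sequence and that ``$D_\ast$ is a complex satisfying, for $k-1$, all conditions in the lemma''); in the actual application (the corollary immediately following) the hypothesis is upgraded to a graded $R$-module isomorphism, which closes the gap.
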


\begin{proof}
We prove Isomorphism \eqref{eq-induction-k} for $0 \leq k \leq m$ by inducting on $k$. If $k=0$, it is obvious that $H_j(C_\ast, d) \cong 0$ if $j>m$ since $C_j\cong 0$ if $j>m$. So Isomorphism \eqref{eq-induction-k} is true if $k=0$.

Now assume Isomorphism \eqref{eq-induction-k} is true for $k-1$. Assume $(C_\ast, d)$ satisfies all the conditions in the lemma. Since $d$ is an $R$-module homomorphism, $t_k^N \cdot C_\ast$ is a subcomplex of $C_\ast$. Denote by $D_\ast$ the quotient complex $D_\ast:= C_\ast/ t_k^N \cdot C_\ast$ and by $C_\ast \xrightarrow{\pi} D_\ast$ the standard quotient map. There is a short exact sequence
\begin{equation}\label{eq-short-exact}
0\rightarrow C_\ast \xrightarrow{t_k^N} C_\ast \xrightarrow{\pi} D_\ast \rightarrow 0,
\end{equation}
which induces a long exact sequence
\begin{equation}\label{eq-long-exact}
\cdots \xrightarrow{\Delta_{j+1}} H_j(C_\ast) \xrightarrow{t_k^N} H_j(C_\ast) \xrightarrow{\pi} H_j(D_\ast) \xrightarrow{\Delta_{j}} H_{j-1}(C_\ast) \xrightarrow{t_k^N} \cdots
\end{equation}
A simple diagram chase shows that the connecting homomorphism $\Delta_j$ is a homogeneous $R$-module homomorphism. By condition (3), the action by $t_k^N$ on $H(C_\ast)$ is $0$. So the long exact sequence \eqref{eq-long-exact} breaks into short exact sequences
\begin{equation}\label{eq-long-exact-broken}
0 \rightarrow H_j(C_\ast) \xrightarrow{\pi} H_j(D_\ast) \xrightarrow{\Delta_{j}} H_{j-1}(C_\ast) \rightarrow 0,
\end{equation}
where both $\pi$ and $\Delta_j$ are homogeneous $R$-homomorphisms. It is clear that $D_\ast$ is a complex satisfying, for $k-1$, all conditions in the lemma. So, by the induction hypothesis, $H_j(D_\ast) \cong 0$ if $j> m-k+1$. Then short exact sequences \eqref{eq-long-exact-broken} implies that $H_j(C_\ast) \cong 0$ if $j> m-k$. This completes the proof of Isomorphism \eqref{eq-induction-k}.

Now we focus on the case $k=m$ and prove Isomorphism \eqref{eq-induction-k=m}. By Isomorphism \eqref{eq-induction-k}, we have that, if $k=m$, then  $H_j(C_\ast) \cong 0$ when $j\geq 1$. It remains to show that, when $k=m$, $H_0(C_\ast) \cong M_0$ as graded $\mathbb{F}$-spaces. This comes down to a straightforward computation of the graded Euler characteristic. For any finitely generated graded $R$-module $M$, denote by $\gdim_\mathbb{F} M$ the graded dimension of $M$. That is, $\gdim_\mathbb{F} M = \sum_{\alpha\in \zed} X^\alpha \dim_\mathbb{F} M^\alpha$, where $M^\alpha$ is the homogeneous component of $M$ of degree $\alpha$. Since $H_j(C_\ast) \cong 0$ if $j\geq 1$, the graded Euler characteristic of the chain complex $C_\ast$ is 
\[
\gdim_\mathbb{F} H_0(C_\ast) = \sum_{j=0}^m (-1)^j \gdim_\mathbb{F} H_j(C_\ast) = \sum_{j=0}^m (-1)^j \gdim_\mathbb{F} C_j = \sum_{j=0}^m (-1)^j X^{jN} \bn{m}{j} \gdim_\mathbb{F} M_m.
\]
But $M_m \cong R$ and $\gdim_\mathbb{F} R = \frac{1}{(1-X)^m}$. So 
\[
\gdim_\mathbb{F} H_0(C_\ast) = \frac{1}{(1-X)^m}\sum_{j=0}^m (-1)^j X^{jN} \bn{m}{j} = \frac{(1-X^N)^m}{(1-X)^m} = \gdim_\mathbb{F} M_0.
\]
This shows that $H_0(C_\ast) \cong M_0$ as graded $\mathbb{F}$-spaces.
\end{proof}

In the proof of Theorem \ref{thm-unlinks}, we only need the following special case of Lemma \ref{lemma-homology-determination}.

\begin{corollary}\label{cor-homology-determination}
Let $R=\zed_2[x_1,x_2,\dots,x_m]$ the polynomial ring with $m$ variables over $\zed_2$ graded so that each $x_i$ is homogeneous of degree $2$. Assume that the chain complex $(C_\ast,d)$ is of the form
\[
 0 \leftarrow C_{-m} \xleftarrow{d_{-m+2}} C_{-m+2} \xleftarrow{d_{-m+4}} \cdots \xleftarrow{d_{m-2l-2}} C_{m-2l-2} \xleftarrow{d_{m-2l}} C_{m-2l} \xleftarrow{d_{m-2l+2}} \cdots \xleftarrow{d_{m-2}} C_{m-2} \xleftarrow{d_{m}} C_{m} \leftarrow 0,
\]
which satisfies:
\begin{enumerate}
  \item $C_{m-2l} \cong R^{\oplus \bn{m}{l}}\{2l\}_x$ as graded $R$-modules for $l=0,1,\dots,m$,  
	\item $d_{m-2l}$ is a homogeneous $R$-module homomorphism of degree $6$ for $l=0,\dots,m-1$,
	\item for each $i=1,\dots,m$, there is an $R$-module homomorphism $C_\ast \xrightarrow{h_i} C_{\ast+2}$ such that 
	\[
	d\circ h_i + h_i \circ d = x_i^2 \cdot \id_{C_\ast}.
	\]
\end{enumerate}
Then 
\[
H_{m-2l}(C_\ast, d) \cong \begin{cases}
\zed_2[x_1,x_2,\dots,x_m]/(x_i^2,\dots,x_m^2)\{2m\}_x & \text{as graded } \zed_2\text{-spaces if } l=m,\\
0 & \text{otherwise.}
\end{cases}
\]
\end{corollary}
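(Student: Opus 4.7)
The plan is to apply Lemma \ref{lemma-homology-determination} with $\mathbb{F} = \zed_2$, $N = 2$, $k = m$, and the lemma's variables $t_i$ identified with our $x_i$. Since the lemma is stated with $t_i$ of degree $1$ while our $x_i$ has degree $2$, we work throughout in the halved grading (in which each $x_i$ has degree $1$); under this rescaling, every shift $\{s\}_x$ from the corollary reads as $\{s/2\}$ (all shifts $\{2l\}_x$ appearing are even, so this is well defined), and the degree-$6$ differential of the corollary becomes a degree-$3$ map.

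I first reindex as a chain complex by setting $\tilde{C}_j := C_{2j - m}$ for $j = 0, 1, \ldots, m$, so that $\tilde{d}_j : \tilde{C}_j \to \tilde{C}_{j - 1}$ matches the direction of the lemma's differential and $\tilde{C}_j \cong R^{\oplus \bn{m}{j}}\{m - j\}$ in the halved grading. Then I apply the grading shift $\tilde{C}_j \rightsquigarrow \tilde{C}'_j := \tilde{C}_j\{3j - m\}$. A short calculation confirms that this choice of shifts is arranged precisely so that (a) each differential of $\tilde{C}'$ is homogeneous of degree $0$, and (b) $\tilde{C}'_j \cong R^{\oplus \bn{m}{j}}\{2j\} = M_m^{\oplus \bn{m}{j}}\{jN\}$, matching hypotheses (1)--(3) of Lemma \ref{lemma-homology-determination} with $M_m = R$ and $N = 2$. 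Hypothesis (4), on the null-homotopies for $t_i^N = x_i^2$, is exactly condition (3) of the corollary.

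Applying the lemma with $k = m$ then gives $H_j(\tilde{C}'_\ast) = 0$ for $j \geq 1$ and $H_0(\tilde{C}'_\ast) \cong M_0 = \zed_2[x_1, \ldots, x_m]/(x_1^2, \ldots, x_m^2)$ as graded $\zed_2$-spaces. Undoing the shift applied to $\tilde{C}_0$ (namely $\{-m\}$ in the halved grading, equivalently $\{-2m\}_x$ in the original degree-$2$ convention), the reindexing ($\tilde{C}_0 = C_{-m}$), and the rescaling, one obtains $H_{m-2l}(C_\ast, d) = 0$ for $0 \leq l < m$ and $H_{-m}(C_\ast, d) \cong \zed_2[x_1, \ldots, x_m]/(x_1^2, \ldots, x_m^2)\{2m\}_x$, as required.

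The main obstacle is the grading bookkeeping: verifying that the single choice $\{3j - m\}$ simultaneously achieves (a) and (b), and tracking the cumulative shifts so that the final answer acquires exactly the $\{2m\}_x$ shift. This is a short explicit computation.
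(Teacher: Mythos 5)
Your proposal is correct and is essentially the paper's own proof: the paper simply states that the corollary is the $\mathbb{F}=\zed_2$, $N=2$, $k=m$ case of Lemma \ref{lemma-homology-determination} ``with an alternative grading convention,'' and your reindexing $\tilde{C}_j = C_{2j-m}$, halved grading, and shift $\{3j-m\}$ (giving degree-$0$ differentials and $\tilde{C}'_j \cong R^{\oplus\bn{m}{j}}\{2j\}$, then undoing $\{-2m\}_x$ on $\tilde{C}_0=C_{-m}$ to recover the $\{2m\}_x$ in the conclusion) is exactly the bookkeeping that remark leaves implicit. No gaps; your version just makes the regrading explicit.
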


\begin{proof}
This is the $\mathbb{F}=\zed_2$, $N=2$, $k=m$ case of Lemma \ref{lemma-homology-determination} with an alternative grading convention.
\end{proof}

Now we are ready to prove Theorem \ref{thm-unlinks}.

\begin{proof}[Proof of Theorem \ref{thm-unlinks}]
Fix a marking of $B$. Pick a marked point on each component of $B$. Assume $x_i$ is the variable assigned to the chosen marked points on the $i$-th component. Then there is an obvious identification $R_B\cong\zed_2[x_1,\dots,x_m]$. 

By Theorem \ref{thm-HOMFLYPT} and Example \ref{eg-unlink-computation}, we know that statement (1) implies statement (2). 

Now assume that statement (2) is true. Then, by Theorem \ref{thm-Ras-ss} and Lemma \ref{lemma-x-square-trivial}, $(E_1(B),d_1) \cong (H(B),d_-)$ is a chain complex satisfying all conditions in Corollary \ref{cor-homology-determination}. This implies that 
\[
E_2^{i,j}(B) \cong \begin{cases}
\zed_2[x_1,x_2,\dots,x_m]/(x_i^2,\dots,x_m^2)\{-m\}_q & \text{as graded } \zed_2\text{-spaces if } i=j=-m,\\
0 & \text{otherwise.}
\end{cases}
\]
Here, note the difference between the $x$-grading and the quantum grading. By Theorem \ref{thm-Ras-ss}, $d_k$ strictly lowers the vertical grading for all $k\geq 2$. But $E_2(B)$ is supported on a single vertical grading. This means $d_k=0$ for all $k\geq 2$. Thus, 
\[
E_\infty^{i,j}(B) \cong E_2^{i,j}(B) \cong \begin{cases}
\zed_2[x_1,x_2,\dots,x_m]/(x_i^2,\dots,x_m^2)\{-m\}_q & \text{as graded } \zed_2\text{-spaces if }  i=j=-m,\\
0 & \text{otherwise.}
\end{cases}
\]
Using Theorem \ref{thm-Ras-ss} again, we get that
\[
H_2^i(B) \cong \begin{cases}
	\zed_2[x_1,\dots,x_m]/(x_1^2,\dots,x_m^2)\{-m\}_q & \text{as graded } \zed_2\text{-spaces if } i=0, \\
	0 & \text{otherwise.}
	\end{cases}
\]
By \cite[Theorem 1.3]{Batson-Seed}, this implies that $B$ is Markov equivalent to the $m$-component unlink.
\end{proof}

\begin{remark}
Using Lemma \ref{lemma-homology-determination}, one can also prove that, if the HOMFLYPT homology over $\Q$ of a closed braid $B$ is isomorphic to that of the $m$-component unlink, then the $\slmf(N)$ homology over $\Q$ of $B$ is also isomorphic to that of the $m$-component unlink.
\end{remark}

\section{A Slight Refinement of Theorem \ref{thm-unlinks}}\label{sec-detection-unknot}

Our refinement of Theorem \ref{thm-unlinks} is based on an earlier result of the author in \cite{Wu-hilbert} that the HOMFLYPT homology determines the number of components of a link. More precisely, the degree of the Hilbert polynomial of the HOMFLYPT homology is the number of components of the link minus one. Since this result is stated over $\Q$ in \cite{Wu-hilbert}, for the convenience of the reader, we re-state it over $\zed_2$ and provide a sketch of its proof.

First, let us recall the Hilbert polynomial, which is a direct consequence of Hilbert's Syzygy Theorem. We state this theorem below. 

\begin{theorem}[Hilbert's Syzygy Theorem]\label{thm-syzygy}
Let $\mathbb{F}$ be a field and $R$ the polynomial ring $R=\mathbb{F}[x_1,\dots,x_m]$ graded so that each $x_i$ is homogeneous of degree $2$. We say that the grading of a graded $R$-module is even if this module contains no non-zero homogeneous elements of odd degrees.

Assume that $M$ is a finitely generated graded $R$-module whose grading is even. Denote by $M_{2n}$ the homogeneous component of $M$ of degree $2n$. Then there is an exact sequence of graded $R$ modules 
\[
0 \rightarrow F_l \rightarrow F_{l-1} \rightarrow \cdots \rightarrow F_1 \rightarrow F_0 \rightarrow M \rightarrow 0,
\] 
in which,
\begin{itemize}
  \item $l\leq m$,
	\item each $F_j$ is a finitely generated free graded module over $R$ whose grading is even,
	\item each arrow is a homogeneous homomorphism of $R$-modules preserving the grading.
\end{itemize}

As a standard consequence, there is a polynomial $P(T)\in \Q[T]$ of degree at most $m-1$ such that $\dim_\mathbb{F} M_{2T} = P(T)$ for $T\gg1$. This $P(T)$ is called the Hilbert polynomial of $M$.
\end{theorem}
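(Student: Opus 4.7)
The plan is to treat this as the classical Hilbert Syzygy Theorem repackaged for the even-grading convention. I would build a minimal free resolution by iterated use of graded Nakayama, bound its length by $m$ using the Koszul complex, and then extract the Hilbert polynomial by a straightforward dimension count.

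For the resolution, I would first pick a minimal set of homogeneous generators of $M$ via the graded Nakayama lemma. Because the grading of $M$ is even, these generators have even degrees, so they assemble into a homogeneous surjection $F_0 \twoheadrightarrow M$ from a finitely generated free graded $R$-module $F_0 \cong \bigoplus_i R\{2n_i\}$ whose grading is also even. The kernel is a graded submodule of $F_0$, hence again has even grading, and is finitely generated since $R$ is Noetherian. Iterating on kernels produces a minimal free graded resolution with even-graded free terms. To bound its length by $m$, I would compute $\Tor_j^R(\mathbb{F}, M)$ using the Koszul complex $K_\bullet(x_1,\dots,x_m)$, which is a length-$m$ free resolution of $\mathbb{F} = R/(x_1,\dots,x_m)$. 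This shows $\Tor_j^R(\mathbb{F},M) = 0$ for $j > m$; since in a minimal resolution the differentials of $F_\bullet \otimes_R \mathbb{F}$ vanish (the image of each differential lies in the maximal ideal times the next term), $\Tor_j^R(\mathbb{F}, M) \cong F_j \otimes_R \mathbb{F}$, and a second application of Nakayama forces $F_j = 0$ for $j > m$.

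For the Hilbert polynomial, note that $\dim_\mathbb{F} R_{2n} = \binom{n+m-1}{m-1}$ agrees with a polynomial in $n$ of degree $m-1$ for $n \geq 0$. Thus, for each $F_j \cong \bigoplus_i R\{2s_{j,i}\}$ appearing in the resolution, $\dim_\mathbb{F}(F_j)_{2n}$ eventually agrees with a polynomial in $n$ of degree at most $m-1$. Exactness of the resolution yields $\dim_\mathbb{F} M_{2n} = \sum_{j=0}^l (-1)^j \dim_\mathbb{F}(F_j)_{2n}$ for all $n$, and this alternating sum is the desired polynomial $P(T)$ of degree at most $m-1$ for $T \gg 1$.

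The main obstacle is nothing substantive — this is essentially the classical statement after doubling degrees. The only point requiring care is checking that each step preserves the even-grading condition, which is automatic since submodules and quotients of even-graded modules are even-graded and shifts by even integers preserve even grading.
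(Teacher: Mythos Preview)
Your proof proposal is correct and follows the classical route to Hilbert's Syzygy Theorem: build a minimal graded free resolution via graded Nakayama, bound its length by $m$ using the Koszul resolution of $\mathbb{F}$ to compute $\Tor^R_\bullet(\mathbb{F},M)$, and then read off the Hilbert polynomial from the alternating sum of dimensions of the free terms. The care you take with the even-grading convention is appropriate and causes no difficulties.

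The paper, however, does not give its own proof of this theorem at all; it simply states the result and refers the reader to \cite{Peeva-book} for a proof. So there is no in-paper argument to compare against. Your sketch is essentially what one would find in that reference (or any standard commutative algebra text), adapted to the degree-$2$ grading convention used here.
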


For a proof of Theorem \ref{thm-syzygy}, see for example \cite{Peeva-book}. Note that, although the graded module structure of $M$ is used in the construction of its Hilbert polynomial, this polynomial is completely determined by the graded $\mathbb{F}$-space structure of $M$.

\begin{definition}\label{def-Hilbert}
Let $B$ be a closed braid and $R_B$ the polynomial ring over $\zed_2$ generated by components of $B$, graded so that each component of $B$ is homogeneous of degree $2$. From its definition in Appendix \ref{sec-HOMFLYPT}, the $x$-grading of $H(B)$ is even. For each horizontal grading $i$ and vertical grading $j$, we define $P_{B,i,j}(T)$ to be the Hilbert polynomial of the graded $R_B$-module $H^{i,j}(B)$ with respect to its $x$-grading. The Hilbert polynomial of $H(B)$ is $P_B(T)=\sum_{(i,j)\in \zed\times \zed}P_{B,i,j}(T)$. Here, note that the right hand side is a finite sum since $H(B)$ is finitely generated over $R_B$.
\end{definition}

The following theorem describes how the Hilbert polynomial of the HOMFLYPT homology determines the number of components of a link.

\begin{theorem}\cite[Theorem 1.2]{Wu-hilbert}\label{thm-hilbert}
Suppose that $B$ is a closed braid of $m$ components. Then the Hilbert polynomial $P_B(T)$ of $H(B)$ is of degree $m-1$.
\end{theorem}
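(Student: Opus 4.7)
The plan is to pin down the order of the pole at $q = 1$ of the $x$-graded Hilbert series of $H(B)$, since for a finitely generated graded module over $R_B \cong \zed_2[x_1, \ldots, x_m]$ with each variable of degree $2$, the degree of the Hilbert polynomial equals this pole order minus one. Applying Hilbert's Syzygy Theorem (Theorem \ref{thm-syzygy}) to each bigraded piece $H^{i,j}(B)$, which is finitely generated over $R_B$, and summing over the finitely many nonzero bigradings, one can write the total $x$-graded Hilbert series of $H(B)$ as
\[
\mathrm{Hilb}_{H(B)}(q) \;=\; \frac{f(q)}{(1-q^2)^m}
\]
for some Laurent polynomial $f(q) \in \zed_2[q^{\pm 1}]$. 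This already yields the upper bound $\deg P_B(T) \leq m - 1$, and it remains only to verify $f(1) \neq 0$.

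For the lower bound, I would compute the graded Euler characteristic of $H(B)$ in the horizontal direction and identify it, via the construction of HOMFLYPT homology recalled in Appendix \ref{sec-HOMFLYPT}, with a specific specialization of the HOMFLYPT polynomial $P_B(a,z)$ of the closed braid $B$. More precisely, the alternating sum $\sum_{i,j,k}(-1)^{i}\dim_{\zed_2} H^{i,j,k}(B)\, a^{j}q^{k}$, after multiplication by the appropriate normalizing prefactor, equals $P_B(a,z)$ under a standard substitution of $z$ in terms of $a$ and $q$. The classical fact that the HOMFLYPT polynomial of an $m$-component link has lowest power of $z$ equal to $z^{1-m}$ (or $z^{-m}$ in the unnormalized convention where the unknot has value $(a-a^{-1})/z$) with nonzero leading coefficient then translates, under this substitution, into $f(1) \neq 0$.

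The main obstacle is the bookkeeping of normalizations: tracking how the pole of order $m$ at $z = 0$ of $P_B(a,z)$ matches the pole at $q = 1$ of $\mathrm{Hilb}_{H(B)}(q)$, and in particular showing that the lowest-order coefficient in $z$ corresponds precisely to $f(1)$ up to a nonzero factor coming from the prefactor and substitution. Once this matching is verified, combining the upper and lower bounds yields $\deg P_B(T) = m-1$. Because Hilbert's Syzygy Theorem, the graded Euler characteristic identification, and the $z \to 0$ asymptotics of the HOMFLYPT polynomial are all characteristic-free, the argument carries over verbatim from the rational-coefficient setting of \cite{Wu-hilbert} to the $\zed_2$-coefficient setting of Theorem \ref{thm-hilbert}.
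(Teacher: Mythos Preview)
Your overall strategy matches the paper's: the upper bound $\deg_T P_B(T)\le m-1$ from Hilbert's Syzygy Theorem is exactly the paper's first step, and the lower bound via the HOMFLYPT polynomial is the paper's second step. But there is a real gap in your lower-bound argument.

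The quantity $f(1)$ you want to control is the value at $q=1$ of the numerator of the \emph{total unsigned} Hilbert series $\sum_{i,j,k}\dim_{\zed_2}H^{i,j,k}(B)\,q^k$. The HOMFLYPT polynomial, by contrast, is an \emph{Euler characteristic}: in the paper's conventions it arises from the alternating sum $\sum_{i,j,k}(-1)^{j/2}a^i x^{2T}\dim_{\zed_2}\hat H^{i,j,2T}(B)$ (note also that the sign sits in the vertical grading and $a$ tracks the horizontal one, the reverse of what you wrote). So the $z^{1-m}$ coefficient of the HOMFLYPT polynomial does not correspond to $f(1)$ ``up to a nonzero factor''; it corresponds to a signed, $a$-weighted combination of the individual numerator values $f_{i,j}(1)$ attached to the pieces $H^{i,j}(B)$. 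A nonzero alternating combination does not by itself force the unsigned sum $f(1)=\sum_{i,j}f_{i,j}(1)$ to be nonzero.

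The missing step---which the paper supplies explicitly---is positivity: each Hilbert polynomial $P_{B,i,j}(T)$ has positive leading coefficient, so there is no cancellation in $P_B(T)=\sum_{i,j}P_{B,i,j}(T)$, and hence $\deg_T P_B(T)=\max_{i,j}\deg_T P_{B,i,j}(T)$. Once the HOMFLYPT argument forces some individual $P_{B,i,j}$ to have degree $m-1$, positivity finishes the proof. (The paper also phrases the HOMFLYPT step slightly differently: it derives a skein relation for $\hat Q_B(a,T)=\sum_{i,j}(-1)^{j/2}a^i\hat P_{B,i,j}(T)$ and applies the Franks--Williams computation-tree argument directly to it, rather than citing the classical $z^{1-m}$ result and translating; these are essentially equivalent routes to the same inequality.)
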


Theorem \ref{thm-hilbert} is proved in \cite{Wu-hilbert} over $\Q$. Its proof over $\zed_2$ is essentially the same. Here we only include a sketch of its proof over $\zed_2$. For more details, see \cite{Wu-hilbert}.

\begin{proof}[Sketch of proof of Theorem \ref{thm-hilbert}]
First of all, $H(B)$ is a finitely generated graded $R_B$-module with an even $x$-grading. So, according to Theorem \ref{thm-syzygy}, $\deg_T P_B(T) \leq m-1$.

Now consider the polynomials 
\begin{eqnarray*}
\hat{F}_B(a,x) & = & \sum_{(i,j)\in \zed\times \zed} (-1)^{\frac{j}{2}} a^i x^{2T} \dim_{\zed_2} \hat{H}^{i,j,2T}(B), \\
\hat{Q}_B(a,T) & = & \sum_{(i,j)\in \zed\times \zed} (-1)^{\frac{j}{2}}a^i \hat{P}_{B,i,j}(T),
\end{eqnarray*} 
where 
\begin{itemize}
	\item $\hat{H}^{i,j,2T}(B)$ is the homogeneous component of the unnormalized HOMFLYPT homology $\hat{H}(B)$ of horizontal grading $i$, vertical grading $j$ and $x$-grading $2T$,
	\item $\hat{P}_{B,i,j}(T)$ is the Hilbert polynomial of $\hat{H}^{i,j}(B)$ with respect to the $x$-grading. 
\end{itemize}

\begin{figure}[ht]
\[
\xymatrix{
\setlength{\unitlength}{1pt}
\begin{picture}(60,55)(-30,-15)

\put(-20,0){\vector(1,1){40}}

\put(20,0){\line(-1,1){18}}

\put(-2,22){\vector(-1,1){18}}

\put(-4,-15){$B_+$}

\end{picture} && \setlength{\unitlength}{1pt}
\begin{picture}(60,55)(-30,-15)

\put(20,0){\vector(-1,1){40}}

\put(-20,0){\line(1,1){18}}

\put(2,22){\vector(1,1){18}}

\put(-4,-15){$B_-$}

\end{picture} && \setlength{\unitlength}{1pt}
\begin{picture}(60,55)(-30,-15)

\put(-20,0){\vector(0,1){40}}

\put(20,0){\vector(0,1){40}}

\put(-4,-15){$B_0$}

\end{picture}
}
\]
\caption{}\label{fig-skein}

\end{figure}

$\hat{F}_B(a,x)$ is the normalization of the HOMFLYPT polynomial satisfying 
\begin{equation}\label{eq-HOMFLYPT-normalization}
\begin{cases}
\hat{F}_B(a,x) \text{ is invariant under transverse Markov moves,} \\
xa^{-2} F_{B_+}(a,x) - x^{-1} a^2 F_{B_-}(a,x) = (x^{-1}-x)F_{B_0}(a,x), \\
F_{B'}(a,x)=-a^{-2}F_B(a,x), \\
F_U(a,x) = \frac{1+a^{-2}x^2}{1-x^2},
\end{cases}
\end{equation}
where 
\begin{itemize}
  \item transverse Markov moves are all Markov moves except the negative stabilization/destabilization,
	\item $B_+$, $B_-$ and $B_0$ are closed braids identical outside the part shown in Figure \ref{fig-skein},
	\item $B'$ is obtained from the closed braid $B$ by a negative stabilization,
	\item $U$ is the closed braid with a single strand.
\end{itemize}
From this, we know that $\hat{Q}_B(a,T)$ satisfies the skein relation: 
	\begin{equation}\label{eq-Hilbert-skein}
	\begin{cases}
	\hat{Q}_B(a,x) \text{ is invariant under transverse Markov moves,} \\
	a^{-2} \hat{Q}_{B_+}(a, T) - a^2 \hat{Q}_{B_-}(a, T+1) = \hat{Q}_{B_0}(a, T+1)- \hat{Q}_{B_0}(a, T),\\
	\hat{Q}_{B'}(\alpha, T) = -a^{-2} \hat{Q}_{B}(a, T), \\
	\hat{Q}_{U}(a, T) = 1+a^{-2}. \\
	\end{cases}
	\end{equation}

Using skein relation \eqref{eq-Hilbert-skein}, one can apply the ``computation tree argument" in \cite{FW} to $\hat{Q}(a,T)$ to establish that $\deg_T \hat{Q}(a,T) \geq m-1$. See \cite[Section 4]{Wu-hilbert} for more details. This implies that $\deg_T \hat{P}_{B,i,j}(T) \geq m-1$ for some $(i,j)\in \zed\times \zed$. But $H(B)$ is obtained from $\hat{H}(B)$ by shifting the horizontal and vertical gradings. So $\{\hat{P}_{B,i,j}(T)~|~ (i,j)\in \zed\times \zed\}$ and $\{P_{B,i,j}(T)~|~ (i,j)\in \zed\times \zed\}$ are the same collection of polynomials of $T$. So $\deg_T P_{B,i,j}(T) \geq m-1$ for some $(i,j)\in \zed\times \zed$. Since the leading coefficient of the Hilbert polynomial of a graded module must be positive, this implies that 
\[
\deg_T P_B(T) = \deg_T \sum_{(i,j)\in \zed\times \zed}P_{B,i,j}(T) = \max\{\deg_T P_{B,i,j}(T)~|~ (i,j)\in \zed\times \zed\} \geq m-1.
\]

Altogether we have $\deg_T P_B(T) = m-1$.
\end{proof}

Using Theorem \ref{thm-hilbert}, we get a slightly stronger version of the detection theorem of unlinks. Note that, unlike in Theorem \ref{thm-unlinks}, we do not assume $B$ has exactly $m$ components in Corollary \ref{cor-unlinks}.

\begin{corollary}\label{cor-unlinks}
Suppose that $B$ is a closed braid. Let $R$ be the polynomial ring over $\zed_2$ generated by $m$ variables, graded so that each variable is homogeneous of degree $2$. Then the following two statements are equivalent.
\begin{enumerate}
	\item $B$ is Markov equivalent to the $m$-component unlink.
	\item As graded $\zed_2$-spaces, the unreduced HOMFLYPT homology $H(B)$ of $B$ is given by 
	\[
	H^{i,j}(B) \cong \begin{cases}
	R^{\oplus \bn{m}{l}}\{2l\}_x & \text{if }i=m-2l, ~0\leq l \leq m, \text{ and } j=-m, \\
	0 & \text{otherwise,}
	\end{cases}
	\]
	where $H^{i,j}(B)$ is the $\zed_2$-subspace of $H(B)$ with horizontal grading $i$ and vertical grading $j$, and $\{s\}_x$ means shifting the $x$-grading by $s$.
\end{enumerate}
\end{corollary}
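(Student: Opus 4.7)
The plan is to reduce Corollary \ref{cor-unlinks} to Theorem \ref{thm-unlinks} by using Theorem \ref{thm-hilbert} to extract the number of components of $B$ from the graded vector space structure of $H(B)$. The implication $(1) \Rightarrow (2)$ is immediate from Theorem \ref{thm-unlinks}, since the $m$-component unlink is a closed braid with exactly $m$ components, so the hypothesis on the component count is automatically satisfied.

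For $(2) \Rightarrow (1)$, the first task is to deduce from the given graded $\zed_2$-space structure of $H(B)$ alone that $B$ has precisely $m$ components. Theorem \ref{thm-hilbert} tells us that the number of components of $B$ equals $1+\deg_T P_B(T)$, and the remark after Theorem \ref{thm-syzygy} guarantees that the Hilbert polynomial $P_B(T)$ is determined by the underlying graded $\zed_2$-space structure of $H(B)$ (not just by its $R_B$-module structure). So the data in (2) is already enough to compute $P_B(T)$ directly, without any reference to the $R_B$-action.

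The computation itself is routine. The graded $\zed_2$-dimension of $R$ at $x$-degree $2T$ equals $\bn{T+m-1}{m-1}$, a polynomial in $T$ of degree $m-1$ with positive leading coefficient $1/(m-1)!$. Accounting for the shifts $\{2l\}_x$ and summing the contributions of the $m+1$ nonzero pieces in (2) yields
\[
P_B(T) = \sum_{l=0}^{m}\bn{m}{l}\bn{T-l+m-1}{m-1}.
\]
Since every summand is a polynomial in $T$ of degree $m-1$ with the same strictly positive leading coefficient, no cancellation occurs in the top term, and therefore $\deg_T P_B(T) = m-1$. By Theorem \ref{thm-hilbert}, $B$ has exactly $m$ components.

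Once the component count is pinned down, the hypotheses of Theorem \ref{thm-unlinks} are satisfied verbatim, and that theorem delivers the conclusion that $B$ is Markov equivalent to the $m$-component unlink. The only step requiring genuine care is the verification that no accidental cancellation lowers the degree of $P_B(T)$ below $m-1$; this is the main potential obstacle, but the uniform positivity of all summands makes it automatic.
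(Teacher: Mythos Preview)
Your proof is correct and follows essentially the same route as the paper: reduce to Theorem \ref{thm-unlinks} by invoking Theorem \ref{thm-hilbert} after computing $\deg_T P_B(T)=m-1$ from the given graded $\zed_2$-space data. The only difference is that you spell out the sum $\sum_{l=0}^m \bn{m}{l}\bn{T-l+m-1}{m-1}$ and the positivity-of-leading-coefficients argument explicitly, whereas the paper simply notes that the Hilbert polynomial of $R$ is $\bn{T+m-1}{m-1}$ and asserts the degree conclusion.
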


\begin{proof}
By Theorem \ref{thm-unlinks}, statement (1) implies statement (2). Now assume statement (2) is true. The Hilbert polynomial of $R$ is $\bn{T+m-1}{m-1}$, which is of degree $m-1$ in $T$. From this, one can see that $\deg_T P_B(T) =m-1$. Therefore, by Theorem \ref{thm-hilbert}, $B$ has exactly $m$ components. Thus, by Theorem \ref{thm-unlinks}, statement (1) is true.
\end{proof}

\appendix

\section{HOMFLYPT Homology, Khovanov Homology and Rasmussen Spectral Sequence over $\zed_2$}\label{sec-HOMFLYPT}

In this appendix, we briefly review the unreduced HOMFLYPT homology over $\zed_2$  and the Rasmussen spectral sequence over $\zed_2$ connecting the HOMFLYPT homology to the Khovanov homology. We use a grading convention that differs from the one in \cite{Ras2}. We would like to emphasize that, although the statement of Theorem \ref{thm-unlinks} does not mention the module structure of $H(B)$, its proof does use this structure. So we will include the definition of this module structure in Lemma \ref{lemma-quotient-base-ring} and Theorem \ref{thm-HOMFLYPT} below.

\subsection{Matrix factorizations} In \cite{Ras2}, both the HOMFLYPT homology and the Khovanov homology are defined using matrix factorizations with an integer homological grading.

\begin{definition}\label{def-mf}
Let $R$ be a graded commutative ring with $1$. In this manuscript, we call the grading of $R$ and the grading of any graded $R$-module the \textbf{$x$-grading}. Let $w$ be a homogeneous element of $R$ of $x$-grading $N$. A graded matrix factorization of $w$ over $R$ is a diagram 
\[
\xymatrix{
0 \ar[r]<.5ex>^{d_+^{l-2}} & M_l \ar[l]<.5ex>^{d_-^{l}}\ar[r]<.5ex>^{d_+^l} & M_{l+2} \ar[l]<.5ex>^{d_-^{l+2}} \ar[r]<.5ex>^{d_+^{l+2}} & \ar[l]<.5ex>^{d_-^{l+2k+4}}\cdots \ar[r]<.5ex>^{d_+^{l+2k-4}} & M_{l+2k-2} \ar[l]<.5ex>^{d_-^{l+2k-2}}\ar[r]<.5ex>^{d_+^{l+2k-2}} & M_{l+2k} \ar[l]<.5ex>^{d_-^{l+2k}} \ar[r]<.5ex>^{d_+^{l+2k}}  & \ar[l]<.5ex>^{d_-^{l+2k+2}} 0,
}
\]
where 
\begin{itemize}
	\item each $M_{l+2j}$ is a graded free $R$-module,
	\item each $d_+^{l+2j}$ is a homogeneous $R$-module homomorphism preserving the $x$-grading,
	\item each $d_-^{l+2j}$ is a homogeneous $R$-module homomorphism raising the $x$-grading by $N$,
	\item $d_+^{l+2j+2} \circ d_+^{l+2j} =0$, $d_-^{l+2j-2} \circ d_-^{l+2j} =0$ and $d_+^{l+2j-2}\circ d_-^{l+2j} + d_-^{l+2j+2}\circ d_+^{l+2j} = w \cdot \id_{M_{l+2j}}$ for $j=0,1,\dots, k$,
	\item the \textbf{horizontal grading} of this matrix factorization is defined so that $M_{l+2j}$ has horizontal grading $l+2j$ for $j=0,1,\dots, k$.
\end{itemize}
We call $d_+$ and $d_-$ the positive and negative differentials of the matrix factorization.
\end{definition}

In the definitions below, we will mostly use a special type of matrix factorizations -- Koszul matrix factorizations.

\begin{definition}\label{def-Koszul}
Let $a$ and $b$ be homogeneous elements of $R$. Then the graded Koszul matrix factorization $K_R(a,b)$ over $R$ of $ab$ is the following diagram
\[
\xymatrix{
K_R(r) := 0 \ar[r]<.5ex> & {\underbrace{R\{\deg b\}_x}_{-2}} \ar[l]<.5ex> \ar[rr]<.5ex>^>>>>>>>>>>{b} &&  {\underbrace{R}_{0}} \ar[ll]<.5ex>^<<<<<<<<<<{a} \ar[r]<.5ex> & \ar[l]<.5ex> 0,                                                   
}
\]
where $a$ and $b$ act by multiplication, the under-braces indicate the horizontal grading, and $\{s\}_x$ means shifting the $x$-grading up by $s$.

More generally, assume that $a_1,b_1,\dots,a_k,b_k$ are homogeneous elements of $R$ such that $a_1b_1,\dots,a_kb_k$ are of the same $x$-grading. Then the graded Koszul matrix factorization $K_R\left(%
\begin{array}{cc}
  a_1 & b_1 \\
	\cdots & \cdots \\
  a_k & b_k
\end{array}%
\right)$ over $R$ of $\sum_{j=1}^k a_j b_j$ is the tensor product over $R$ of the graded Koszul matrix factorizations of all rows $(a_j,b_j)$. That is,
\[
K_R\left(%
\begin{array}{cc}
  a_1 & b_1 \\
	\cdots & \cdots \\
  a_k & b_k
\end{array}%
\right) := K_R(a_1,b_1) \otimes_R K_R(a_2,b_2) \otimes_R \cdots \otimes_R K_R(a_k,b_k).
\]

Note that permuting the sequence $\{(a_1,b_1),\dots,(a_k,b_k)\}$ permutes the factors in the above tensor product and, therefore, does not change the isomorphism type of the Koszul matrix factorization $K_R\left(%
\begin{array}{cc}
  a_1 & b_1 \\
	\cdots & \cdots \\
  a_k & b_k
\end{array}%
\right)$.
\end{definition}

\subsection{Matrix factorizations of MOY graphs}

Next, we recall the matrix factorizations of MOY graphs used to defined both the HOMFLYPT homology and the Khovanov homology.

\begin{figure}[ht]
\[
\xymatrix{
\setlength{\unitlength}{1pt}
\begin{picture}(60,40)(-30,0)

\put(-20,0){\vector(1,1){20}}

\put(0,20){\vector(1,1){20}}

\put(20,0){\vector(-1,1){20}}

\put(0,20){\vector(-1,1){20}}

\end{picture}
}
\]
\caption{}\label{fig-MOY-vertex}

\end{figure}

\begin{definition}\label{def-MOY}
An MOY graph $\Gamma$ is an embedding of a directed graph in the plane so that 
\begin{itemize}
	\item each vertex of $\Gamma$ has valence $4$ or $1$,
	\item each $4$-valent vertex of $\Gamma$ looks like the one in Figure \ref{fig-MOY-vertex}.
\end{itemize}
We call a $1$-valent vertex of $\Gamma$ an endpoint, and a $4$-valent vertex an interior vertex.

A marking of an MOY graph $\Gamma$ consists of 
\begin{itemize}
	\item a finite collection of marked points on $\Gamma$ such that 
		\begin{itemize}
		\item none of the interior vertices are marked,
		\item all endpoints are marked,
		\item each edge of $\Gamma$ contains at least one marked points,
	  \end{itemize}
	\item an assignment to each marked point a distinct variable of degree $2$.
\end{itemize}
\end{definition}

\begin{figure}[ht]

\[
\xymatrix{
\setlength{\unitlength}{1pt}
\begin{picture}(60,45)(-30,-15)

\put(-20,15){\vector(1,0){40}}

\put(-30,15){\small{$x_i$}}

\put(23,15){\small{$x_s$}}

\put(-4,-15){$\Gamma_{i;s}$}

\end{picture} && \setlength{\unitlength}{1pt}
\begin{picture}(70,45)(-35,-15)

\put(-20,0){\vector(4,3){20}}

\put(-20,30){\vector(4,-3){20}}

\put(0,15){\vector(4,3){20}}

\put(0,15){\vector(4,-3){20}}

\put(-35,0){\small{$x_j$}}

\put(-35,25){\small{$x_i$}}

\put(28,0){\small{$x_t$}}

\put(28,25){\small{$x_s$}}

\put(-4,-15){$\Gamma_{i,j;s,t}$}

\end{picture}
}
\]

\caption{}\label{fig-MOY-pieces}

\end{figure}

Fix an MOY graph $\Gamma$ and a marking of $\Gamma$. Assume that $x_1,\dots,x_n$ are all the variables assigned to marked points in $\Gamma$. Let $R=\zed_2[x_1,\dots,x_n]$ graded so that $\deg x_1=\cdots=\deg x_n =2$. Cut $\Gamma$ at all of its marked points. We get a collection of pieces $\Gamma_1,\dots,\Gamma_m$, each of which is of one of the two types in Figure \ref{fig-MOY-pieces}. We define their matrix factorizations by the following.

\begin{itemize}
	\item If $\Gamma_q = \Gamma_{i;s}$ in Figure \ref{fig-MOY-pieces}, then $R_q = \zed_2[x_i,x_s]$ and $\hat{C}(\Gamma_q) = K_{R_q}(x_s^2+x_sx_i+x_i^2,x_s+x_i)$.
	\item If $\Gamma_q = \Gamma_{i,j;s,t}$ in Figure \ref{fig-MOY-pieces}, then $R_q = \zed_2[x_i,x_j,x_s,x_t]$ and
	\[
	\hat{C}(\Gamma_q) = K_{R_q}\left(%
\begin{array}{cc}
  x_s^2+x_t^2+x_i^2+x_j^2+x_t(x_s+x_i+x_j) & x_s+x_t+x_i+x_j \\
  x_i+x_j & (x_s+x_i)(x_s+x_j)
\end{array}%
\right)\{-1\}_x.
	\]
\end{itemize}

\begin{definition}\label{def-Koszul-MOY}
The matrix factorization of $\Gamma$ is defined to be
\[
\hat{C}(\Gamma) := \bigotimes_{q=1}^m (\hat{C}(\Gamma_q) \otimes_{R_q} R),
\]
where the big tensor product ``$\bigotimes_{q=1}^m$" is taken over the ring $R=\zed_2[x_1,\dots,x_n]$.

Note that $\hat{C}(\Gamma)$ is a graded matrix factorization over $R$. It has two gradings: the horizontal grading and the $x$-grading. Its positive differential $d_+$ raises the horizontal grading by $2$ and preserves the $x$-grading. Its negative differential $d_-$ lowers the horizontal gradings by $2$ and raises the $x$-grading by $6$.  
\end{definition}

\subsection{The unnormalized HOMFLYPT complex of a tangle} 

\begin{definition}\label{def-marking-braid}
For a tangle $T$, an arc of $T$ is a part of $T$ that starts and ends at crossings or endpoints and contains no crossings or endpoints in its interior. A marking of $T$ consists of
\begin{itemize}
	\item a finite collection of marked points on $T$ such that 
	\begin{itemize}
		\item all endpoints of $T$ are marked,
		\item none of the crossings of $T$ is marked, 
		\item each arc of $T$ contains at least one marked point,
	\end{itemize}
	\item an assignment that assigns to each marked point a distinct variable of degree $2$.
\end{itemize}
\end{definition}

Let $T$ be a tangle with a marking. Assume $x_1,\dots,x_n$ are all the variables assigned to marked points in $T$. The ring $R:=\zed_2[x_1,\dots,x_n]$ is graded so that $\deg x_j =2$ for $j=1,\dots,n$.

Cut $T$ at all of its marked points. This cuts $T$ into a collection $\{T_1,\dots,T_m\}$ of simple tangles, each of which is of one of the three types in Figure \ref{tangle-pieces-fig} and is marked only at its endpoints.

\begin{figure}[ht]
$
\xymatrix{
\setlength{\unitlength}{1pt}
\begin{picture}(60,55)(-30,-15)

\put(0,0){\vector(0,1){40}}

\put(-10,35){\small{$x_s$}}

\put(-10,0){\small{$x_i$}}

\put(-4,-15){$A$}

\end{picture} && \setlength{\unitlength}{1pt}
\begin{picture}(60,55)(-30,-15)

\put(-20,0){\vector(1,1){40}}

\put(20,0){\line(-1,1){18}}

\put(-2,22){\vector(-1,1){18}}

\put(-30,35){\small{$x_s$}}

\put(-30,0){\small{$x_i$}}

\put(23,35){\small{$x_t$}}

\put(23,0){\small{$x_j$}}

\put(-4,-15){$C_+$}

\end{picture} && \setlength{\unitlength}{1pt}
\begin{picture}(60,55)(-30,-15)

\put(20,0){\vector(-1,1){40}}

\put(-20,0){\line(1,1){18}}

\put(2,22){\vector(1,1){18}}

\put(-30,35){\small{$x_s$}}

\put(-30,0){\small{$x_i$}}

\put(23,35){\small{$x_t$}}

\put(23,0){\small{$x_j$}}

\put(-4,-15){$C_-$}

\end{picture} 
}
$
\caption{}\label{tangle-pieces-fig}

\end{figure}

If $T_q=A$, then $R_q =\zed_2[x_i,x_s]$ and the unnormalized HOMFLYPT complex $\hat{C}(T_q)$ of $T_q$ is 
\begin{equation}\label{eq-def-chain-arc}
\xymatrix{
{\underbrace{0}_{2}}  \\ {\underbrace{\hat{C}(A)}_{0}} \ar[u] \\ {\underbrace{0}_{-2}} \ar[u]},
\end{equation}
where $\hat{C}(A)$ is the matrix factorization defined in the previous subsection, and the under-braces indicate the \textbf{vertical gradings}.

To define the complexes of crossings, we need the $\chi$-maps.

\begin{figure}[ht]
$
\xymatrix{
\setlength{\unitlength}{1pt}
\begin{picture}(60,55)(-30,-15)

\put(-20,0){\vector(0,1){40}}

\put(20,0){\vector(0,1){40}}

\put(-30,35){\small{$x_s$}}

\put(-30,0){\small{$x_i$}}

\put(23,35){\small{$x_t$}}

\put(23,0){\small{$x_j$}}

\put(-4,-15){$\Gamma_0$}

\end{picture} \ar@<8ex>[rr]^{\chi^0} && \setlength{\unitlength}{1pt}
\begin{picture}(60,55)(-30,-15)

\put(-20,0){\vector(1,1){20}}

\put(20,0){\vector(-1,1){20}}

\put(0,20){\vector(-1,1){20}}

\put(0,20){\vector(1,1){20}}

\put(-30,35){\small{$x_s$}}

\put(-30,0){\small{$x_i$}}

\put(23,35){\small{$x_t$}}

\put(23,0){\small{$x_j$}}

\put(-4,-15){$\Gamma_1$}

\end{picture} \ar@<-6ex>[ll]^{\chi^1}
}
$
\caption{}\label{def-chi-fig}

\end{figure}

\begin{lemma}\cite{KR1,KR2,Ras2}\label{lemma-def-chi}
Let $\Gamma_0$ and $\Gamma_1$ be the MOY graphs marked as in Figure \ref{def-chi-fig}, and $R=\zed_2[x_s,x_t,x_i,x_j]$. Then there exist homogeneous morphisms of matrix factorizations $\hat{C}(\Gamma_0) \xrightarrow{\chi^0} \hat{C}(\Gamma_1)$ and $\hat{C}(\Gamma_1) \xrightarrow{\chi^1} \hat{C}(\Gamma_0)$ of $x$-degree $1$ satisfying $\chi^1 \circ \chi^0 \simeq (x_s-x_j)\cdot \id_{\hat{C}(\Gamma_0)}$ and $\chi^0 \circ \chi^1 \simeq (x_s-x_j)\cdot \id_{\hat{C}(\Gamma_1)}$. Here, a map between two matrix factorizations over $R$ is a ``morphism of matrix factorizations" if it is an $R$-module homomorphism preserving the horizontal grading and commuting with both the positive and the negative differentials. 
\end{lemma}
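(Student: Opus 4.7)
The plan is to construct $\chi^0$ and $\chi^1$ explicitly as morphisms between two Koszul matrix factorizations of a common potential, and then verify the composition identities by a direct computation reduced to a normal form.

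First, I would unpack both sides using Definition \ref{def-Koszul-MOY}. Since $\Gamma_0$ has no interior vertices, cutting at its four endpoints yields two disjoint arcs, so
\[
\hat{C}(\Gamma_0) \;=\; K_R\!\left(\begin{array}{cc} x_s^2+x_sx_i+x_i^2 & x_s+x_i \\ x_t^2+x_tx_j+x_j^2 & x_t+x_j \end{array}\right),
\]
while $\Gamma_1$ is itself a single $\Gamma_{i,j;s,t}$-piece, so $\hat{C}(\Gamma_1)$ is the shifted Koszul factorization with first row $(x_s^2+x_t^2+x_i^2+x_j^2+x_t(x_s+x_i+x_j),\, x_s+x_t+x_i+x_j)$ and second row $(x_i+x_j,\, (x_s+x_i)(x_s+x_j))$. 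A short computation using $(u+v)(u^2+uv+v^2)=u^3+v^3$ over $\zed_2$ verifies that both factorize the common potential $w = x_s^3+x_i^3+x_t^3+x_j^3$.

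Next, I would produce $\chi^0$ and $\chi^1$ via the standard elementary row-operation machinery for Koszul factorizations developed in \cite[\S5]{KR1}. The key identity
\[
x_s+x_t+x_i+x_j \;=\; (x_s+x_i)+(x_t+x_j)
\]
lets me perform an elementary row operation on the matrix defining $\hat{C}(\Gamma_0)$, converting it (up to isomorphism of matrix factorizations) into a Koszul factorization whose first row matches the first row of $\hat{C}(\Gamma_1)$. After this reduction, the comparison of the two factorizations is controlled by the residual row, and $\chi^0$, $\chi^1$ are defined by the obvious comparison between the two resulting rank-$1$ Koszul presentations of the residual potential; these are multiplication-by-polynomial maps whose $x$-degrees, together with the $\{-1\}_x$ shift, combine to give overall $x$-degree $1$. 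Checking that the resulting maps commute with both $d_+$ and $d_-$ and preserve the horizontal grading is then a direct matrix computation.

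Finally, I would verify the composition formulas. After the normal-form reduction, each of $\chi^1\circ\chi^0$ and $\chi^0\circ\chi^1$ acts on the tensor factor where the reduction took place by multiplication by an explicit polynomial that is congruent to $x_s-x_j=x_s+x_j$ modulo the Koszul ideal generated by the first-row entries; exhibiting the congruence as an $R$-linear combination produces the required null-homotopy. The main obstacle is bookkeeping: one must carry the $\{-1\}_x$ shift consistently through each step, keep the rank-$4$ total complex organized while performing the row operations, and produce the explicit homotopy witnessing $\chi^1\chi^0 \simeq (x_s+x_j)\cdot\id$. Although over $\zed_2$ signs disappear, the computation is essentially symbolic in $\zed_2[x_s,x_t,x_i,x_j]$, and this is where all the real content of the lemma lies.
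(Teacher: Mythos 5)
The paper itself offers no proof of this lemma: it is imported wholesale from \cite{KR1,KR2,Ras2}, and your plan is essentially the argument found in those references --- write $\hat{C}(\Gamma_0)$ and $\hat{C}(\Gamma_1)$ as Koszul factorizations of the common potential $w=x_s^3+x_t^3+x_i^3+x_j^3$ (your verification of the common potential is correct), align them by Koszul-factorization manipulations, and define $\chi^0,\chi^1$ by the elementary comparison maps on the remaining rank-one factor, whose compositions are multiplication by $x_s+x_j=x_s-x_j$. So the route is the right one; but one pivotal step is stated too loosely to be carried out as written.

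Namely, the single row operation coming from $x_s+x_t+x_i+x_j=(x_s+x_i)+(x_t+x_j)$ matches only the linear (right-hand) entries of the first rows: it turns $\hat{C}(\Gamma_0)$ into the Koszul factorization with rows $(x_s^2+x_sx_i+x_i^2,\ x_s+x_t+x_i+x_j)$ and $(x_s^2+x_sx_i+x_i^2+x_t^2+x_tx_j+x_j^2,\ x_t+x_j)$, and the quadratic entry $x_s^2+x_sx_i+x_i^2$ is not the quadratic first-row entry of $\hat{C}(\Gamma_1)$. In fact no Koszul factorization of $w$ whose first row equals that of $\hat{C}(\Gamma_1)$ can have second-row right entry $x_t+x_j$, since $w$ minus the first-row product of $\hat{C}(\Gamma_1)$ equals $(x_i+x_j)(x_s+x_i)(x_s+x_j)$, which is not divisible by $x_t+x_j$; so row operations alone never literally match the first rows in your intermediate form. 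The standard repair is either to bypass the alignment and write the explicit $2\times 2$ matrices for $\chi^0,\chi^1$ and check the identities directly, as in \cite{KR1,Ras2}, or to push your reduction one step further: perform one more row operation to change the second-row right entry to $x_s+x_i$, and then adjust the left-hand column by multiples of the right-hand entries (a Koszul-syzygy move, an isomorphism here because $(x_s+x_t+x_i+x_j,\,x_s+x_i)$ is a regular sequence). This identifies $\hat{C}(\Gamma_0)$ with the Koszul factorization whose first row is that of $\hat{C}(\Gamma_1)$ and whose second row is $\bigl((x_i+x_j)(x_s+x_j),\ x_s+x_i\bigr)$. Comparing with the second row $\bigl(x_i+x_j,\ (x_s+x_i)(x_s+x_j)\bigr)$ of $\hat{C}(\Gamma_1)$, the residual maps $(1,\,x_s+x_j)$ and $(x_s+x_j,\,1)$ give $\chi^0,\chi^1$ of $x$-degree $1$ after the $\{-1\}_x$ shift, with both compositions literally equal to multiplication by $x_s+x_j$ under these identifications; your fallback observation that multiplication by any entry of a Koszul factorization is null-homotopic is what makes the ``congruent modulo the Koszul ideal'' argument legitimate if one only matches the compositions up to such terms. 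With that correction your outline is correct and coincides with the proofs in the cited sources.
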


\begin{figure}[ht]
$
\xymatrix{
&& \setlength{\unitlength}{1pt}
\begin{picture}(60,55)(-30,-15)

\put(-20,0){\vector(1,1){40}}

\put(20,0){\line(-1,1){18}}

\put(-2,22){\vector(-1,1){18}}

\put(-30,35){\small{$x_s$}}

\put(-30,0){\small{$x_i$}}

\put(23,35){\small{$x_t$}}

\put(23,0){\small{$x_j$}}

\put(-4,-15){$C_+$}

\end{picture}  \ar@<-12ex>[lld]\ar@<12ex>[rrd] && \\
 \setlength{\unitlength}{1pt}
\begin{picture}(60,55)(-30,-15)

\put(-20,0){\vector(0,1){40}}

\put(20,0){\vector(0,1){40}}

\put(-30,35){\small{$x_s$}}

\put(-30,0){\small{$x_i$}}

\put(23,35){\small{$x_t$}}

\put(23,0){\small{$x_j$}}

\put(-4,-15){$\Gamma_0$}

\end{picture}&&&& \setlength{\unitlength}{1pt}
\begin{picture}(60,55)(-30,-15)

\put(-20,0){\vector(1,1){20}}

\put(20,0){\vector(-1,1){20}}

\put(0,20){\vector(-1,1){20}}

\put(0,20){\vector(1,1){20}}

\put(-30,35){\small{$x_s$}}

\put(-30,0){\small{$x_i$}}

\put(23,35){\small{$x_t$}}

\put(23,0){\small{$x_j$}}

\put(-4,-15){$\Gamma_1$}

\end{picture} \\
&& \setlength{\unitlength}{1pt}
\begin{picture}(60,55)(-30,-15)

\put(20,0){\vector(-1,1){40}}

\put(-20,0){\line(1,1){18}}

\put(2,22){\vector(1,1){18}}

\put(-30,35){\small{$x_s$}}

\put(-30,0){\small{$x_i$}}

\put(23,35){\small{$x_t$}}

\put(23,0){\small{$x_j$}}

\put(-4,-15){$C_-$}

\end{picture} \ar[llu] \ar[rru] &&
}
$
\caption{}\label{crossing-res-fig}

\end{figure}

Now consider the resolutions in Figure \ref{crossing-res-fig}. If $T_q=C_\pm$, then $R_q = \zed_2[x_i,x_j,x_s,x_t]$ and the unnormalized HOMFLYPT chain complex $\hat{C}(T_q)$ of $T_q$ is 
\[
\xymatrix{
\hat{C}(C_+) \ar@{=}[d] && \text{or} && \hat{C}(C_-) \ar@{=}[d] \\
{\underbrace{0}_{2}} && && {\underbrace{0}_{4}} \\
{\underbrace{\hat{C}(\Gamma_0)\{2\}_h\{-2\}_x}_{0}} \ar[u] &&  && {\underbrace{\hat{C}(\Gamma_1)\{-2\}_h\{1\}_x}_{2}}  \ar[u]\\
{\underbrace{\hat{C}(\Gamma_1)\{2\}_h\{-1\}_x}_{-2}} \ar[u]^<<<<<{\chi^1} &&  && {\underbrace{\hat{C}(\Gamma_0)\{-2\}_h\{2\}_x}_{0}}  \ar[u]^<<<<<{\chi^0} \\
{\underbrace{0}_{-4}}  \ar[u] && && {\underbrace{0}_{-2}}  \ar[u]
},
\]
where the under-braces indicate the \textbf{vertical gradings}, $\{s\}_h$ means shifting the horizontal grading by $s$, and $\{t\}_x$ means shifting the $x$-grading by $t$.

\begin{definition}\label{def-complex-tangle}
We define the unnormalized HOMFLYPT chain complex $\hat{C}(T)$ associated to $T$ to be 
\[
\hat{C}(T)) := \bigotimes_{q=1}^{m} (\hat{C}(T_q)\otimes_{R_q} R),
\]
where the big tensor product ``$\bigotimes_{q=1}^{m}$" is taken over $R=\zed_2[x_1,\dots,x_n]$. 

Note that $\hat{C}(T)$ has three gradings: the vertical grading, the horizontal grading and the $x$-grading. It has three differentials: 
\begin{itemize}
	\item the vertical differential $d_v$, which raises the vertical grading by $2$ and preserves the two other gradings,
	\item the positive differential $d_+$, which raises the horizontal grading by $2$ and preserves the two other gradings,
	\item the negative differential $d_-$, which preserves the vertical grading, lowers the horizontal grading by $2$ and raises the $x$-grading by $6$.
\end{itemize}
Moreover, these three differentials commute with each other.\footnote{Since we are working over $\zed_2$, there is no difference between commuting and anti-commuting.}  
\end{definition}

\subsection{HOMFLYPT homology} The HOMFLYPT homology of links are defined using closed braid diagrams of links. Let $B$ be a closed braid with a marking. Assume $B$ has $b$ strands and its writhe is $w$. Recall that the self linking number of $B$ is $sl(B)=w-b$.

\begin{definition}\label{def-homology-braid}
The unnormalized unreduced HOMFLYPT homology of $B$ is $\hat{H}(B):= H(H(\hat{C}(B),d_+),d_v)$.

The normalized HOMFLYPT chain complex of $B$ is $C(B) := \hat{C}(B)\{w-b\}_v\{b-w\}_h$, where $\{r\}_v\{s\}_h$ means shifting the vertical grading by $r$ and the horizontal grading by $s$. The unreduced HOMFLYPT homology of $B$ is $H(B):= H(H(C(B),d_+),d_v)=\hat{H}(B)\{w-b\}_v\{b-w\}_h$.
\end{definition}

Let $B$ be a closed braid with a marking, and $x_1,\dots,x_n$ all the variables assigned to marked points on $B$. The ring $R=\zed_2[x_1,\dots,x_n]$ is graded so that each $x_j$ is homogeneous of degree $2$. From its definition, $C(B)$ is a $\zed^3$-graded $R$-module, and all of its three differentials are homogeneous $R$-module homomorphisms. So $H(B):= H(H(C(B),d_+),d_v)$ is a $\zed^3$-graded $R$-module.

\begin{lemma}\cite[Lemma 3.4]{Ras-2-bridge}\label{lemma-quotient-base-ring}
If $x_i$ and $x_j$ are assigned to marked points on the same component of $B$, then the multiplications by $x_i$ and by $x_j$ are the same on $H(B)$.

In particular, this means that $H(B)$ is a finitely generated $\zed^3$-graded module over the graded ring
\[
R_B:= R/(\{x_i-x_j~|~x_i \text{ and } x_j \text{ are assigned to marked points on the same component of } B\}).
\]
\end{lemma}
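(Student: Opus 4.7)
The plan is to reduce, by transitivity of equality and the fact that each component of $B$ is traversed by a sequence of arcs, to showing that for consecutive marked points $p,q$ on the same component of $B$, multiplication by $x_p+x_q$ vanishes on $H(B)$. Such a consecutive pair is of one of two types: both lie on the same arc of $B$ (no crossing between them), or they lie on two adjacent arcs separated by exactly one crossing.

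First I would handle the same-arc case, which is essentially the same construction that Lemma~\ref{lemma-x-square-trivial} uses but with $d_+$ in place of $d_-$. The arc $A$ connecting $p$ and $q$ carries the matrix factorization $\hat{C}(A)=K_{R_A}(x_p^2+x_px_q+x_q^2,\,x_p+x_q)$, whose positive differential $d_+^A$ has the linear factor $x_p+x_q$ as one of its components. The grading-shifted identity map $R_A\to R_A\{2\}_x$ supplies a local $R_A$-linear endomorphism $h$ of $\hat{C}(A)$ satisfying $d_+^A\circ h+h\circ d_+^A=(x_p+x_q)\cdot\id_{\hat{C}(A)}$. Tensoring $h$ with identities on all remaining pieces of $\hat{C}(B)=\bigotimes_k\hat{C}(T_k)\otimes_{R_k}R$ produces a global $R$-linear endomorphism $H$ of $\hat{C}(B)$, which automatically commutes with the vertical differential $d_v$ (since $h$ acts as the identity on every crossing piece) and satisfies $d_+\circ H+H\circ d_+=(x_p+x_q)\cdot\id_{\hat{C}(B)}$. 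Passing to $H(B)=H(H(C(B),d_+),d_v)$, multiplication by $x_p+x_q$ vanishes, which in characteristic $2$ is precisely $x_p=x_q$ on $H(B)$.

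The main obstacle is the across-a-crossing case: $p$ and $q$ now sit at the two ends of a single strand of a crossing $C_\pm$, and one must extract the identification from the two-term vertical complex $\hat{C}(C_\pm)=[\hat{C}(\Gamma_1)\{\cdots\}\xrightarrow{\chi^1}\hat{C}(\Gamma_0)\{\cdots\}]$. The plan is to use the $\chi$-maps of Lemma~\ref{lemma-def-chi} as a vertical null-homotopy: taking $K=\chi^0$ as a map from vertical grading $0$ back to vertical grading $-2$, and combining the relations $\chi^0\circ\chi^1\simeq(x_s+x_j)\cdot\id$ and $\chi^1\circ\chi^0\simeq(x_s+x_j)\cdot\id$ (modulo $d_+$-homotopies) yields $d_v\circ K+K\circ d_v\simeq(x_s+x_j)\cdot\id_{\hat{C}(C_\pm)}$; tensoring with identities as before shows that $x_s+x_j$ acts as zero on $H(B)$, giving the variable identification along one of the two strands through the crossing. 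The identification along the other strand (i.e.\ $x_i+x_t=0$) should follow from an analogous construction using the symmetric pair of morphisms for the $4$-valent vertex obtained by interchanging the roles of its two strands; together these handle both the over- and under-strand cases of $C_\pm$.

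The hardest part is clearly the across-a-crossing case: one must assemble homotopies compatible with both $d_+$ and $d_v$, keep the $\zed^3$-grading conventions straight, and verify the symmetric version producing the other strand's identification. For the detailed bookkeeping I would consult \cite[Lemma~3.4]{Ras-2-bridge}, which is the cited source of this lemma.
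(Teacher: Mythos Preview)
The paper does not give its own proof of this lemma: after the statement it simply cites \cite[Lemma~3.4]{Ras-2-bridge} and remarks that Rasmussen's argument for $\slmf(N)$ homology over $\Q$ adapts to $H(B)$ over $\zed_2$ without essential change, leaving the details to the reader. Your sketch is exactly such an adaptation and is the standard argument; you even defer to the same reference for the bookkeeping, so you and the paper are doing the same thing, only you have written out more of it.

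One point worth tightening: your handling of the second strand through a crossing via a ``symmetric pair of morphisms'' is a bit vague. A cleaner route is available once you have $x_s+x_j=0$ on $H(B)$ from the $\chi$-map homotopy. Observe that $x_s+x_t+x_i+x_j$ already acts by zero on $H(\hat{C}(C_\pm),d_+)$: on the $\Gamma_0$ piece this follows from the two arc relations $x_s+x_i=0$ and $x_t+x_j=0$, and on the $\Gamma_1$ piece it is the first Koszul row of $\hat{C}(\Gamma_1)$, where $d_+$ is multiplication by $x_s+x_t+x_i+x_j$. Hence $x_i+x_t=(x_s+x_t+x_i+x_j)+(x_s+x_j)$ also acts by zero on $H(B)$, giving the identification along the other strand without invoking any additional symmetry.
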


The proof in \cite{Ras-2-bridge} of this lemma is for $\slmf(N)$ homology over $\Q$. But that proof can be adapted to work for $H(B)$ without essential changes. We leave details to the reader. 

Note that $R_B$ can be viewed as the polynomial ring over $\zed_2$ generated by components of $B$, with each component homogeneous of degree $2$.

\begin{theorem}\cite{KR2,Krasner-integral}\label{thm-HOMFLYPT}
Suppose that $B$ is a closed braid and $R_B$ is the polynomial ring over $\zed_2$ generated by components of $B$, graded so that each component of $B$ is homogeneous of degree $2$. Then the unreduced HOMFLYPT homology $H(B)$ of $B$ over $\zed_2$ is a finitely generated $\zed^3$-graded $R_B$-module. Up to isomorphisms of $\zed^3$-graded $R_B$-modules, $H(B)$ is independent of the marking and invariant under Markov moves. 
\end{theorem}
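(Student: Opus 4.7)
The plan is to verify the three assertions of the theorem in sequence: that $H(B)$ carries the claimed finitely generated $\zed^3$-graded $R_B$-module structure, that it is independent of the choice of marking, and that it is invariant under the Markov moves.

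First, I would observe that by Definition \ref{def-complex-tangle}, $\hat{C}(B)$ is a tensor product over $R=\zed_2[x_1,\dots,x_n]$ of matrix factorizations of finitely many simple tangles, each a finite complex of finitely generated free graded $R$-modules. Consequently every homogeneous component (in each of the horizontal, vertical, and $x$-gradings) of $\hat{C}(B)$ is finitely generated over $\zed_2$, and all three differentials $d_+,d_-,d_v$ are homogeneous $R$-linear maps. Normalizing to $C(B)$ is merely a grading shift. Taking the double homology $H(H(C(B),d_+),d_v)$ therefore produces a $\zed^3$-graded $R$-module, finitely generated because $R$ is Noetherian and only finitely many horizontal/vertical bidegrees appear. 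Lemma \ref{lemma-quotient-base-ring} then promotes this to a $\zed^3$-graded $R_B$-module structure.

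Second, for independence of the marking, I would reduce to two local moves: renaming a variable (tautological) and inserting or deleting a marked point in the interior of an arc of $B$. For the latter, the factor $\hat{C}(A)$ associated to an arc gets replaced by the tensor product of two arc-matrix factorizations joined at the new marked point $x_k$, and the task is to exhibit an $R$-linear homogeneous homotopy equivalence between them. This is the standard Koszul contraction along a row whose second entry is $x_k$ minus a neighbouring variable; the computation is identical to the one in \cite{KR2,Krasner-integral} and uses nothing specific to the ground field. The equivalence survives tensoring with the remaining factors and descends to an isomorphism of the double homology.

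Third, for Markov invariance I would separately treat the braid-like Reidemeister II and III moves and the positive and negative stabilizations. Each is verified locally in the category of matrix factorizations of MOY graphs by stringing together the $\chi$-maps of Lemma \ref{lemma-def-chi} into explicit chain homotopy equivalences. The arguments of \cite{KR2} and their integral refinement in \cite{Krasner-integral} are entirely Koszul-theoretic over the polynomial ring, involving no division, no sign conventions essential to characteristic zero, and no torsion subtleties, so they transport verbatim to $\zed_2$. Since every local equivalence is $R$-linear and homogeneous in all three gradings, after tensoring with the remainder of the braid diagram and passing to double homology it induces the required $\zed^3$-graded $R_B$-module isomorphism.

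The one point that is not automatic is bookkeeping: the normalization of $H(B)$ adopted in Definition \ref{def-homology-braid} differs from that of \cite{KR2,Ras2}, so the hard part is carefully tracking the shifts $\{w-b\}_v\{b-w\}_h$ through the stabilization calculation. One must confirm that both the vertical and horizontal gradings (not merely their sum) are preserved, in order to obtain a $\zed^3$-graded invariant rather than merely a bigraded one; once this accounting is finished, no further argument is needed beyond what the cited literature already supplies.
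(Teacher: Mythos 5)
Your proposal is correct and follows essentially the same route as the paper, which likewise defers the homotopy equivalences for marking changes and Markov moves to \cite{KR2,Krasner-integral}, notes that those Koszul-theoretic arguments carry over to $\zed_2$ unchanged, and obtains $R_B$-linearity of the resulting isomorphisms from their locality together with Lemma \ref{lemma-quotient-base-ring} (each component of $B$ retains a marked point outside the region affected by the move). The only slight looseness is your phrase ``every local equivalence is $R$-linear'': strictly, it is linear only in the variables untouched by the move, which is exactly why the paper invokes the existence of an untouched marked point on every component.
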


Theorem \ref{thm-HOMFLYPT} is proved over $\Q$ in \cite{KR2} and over $\zed$ in \cite{Krasner-integral}. These proofs can be adapted to the base field $\zed_2$ without essential changes. The $R_B$-module structure of the HOMFLYPT homology is not explicitly addressed in \cite{KR2,Krasner-integral}. But the proofs of invariance in these papers are local. For each local marking change or braid-like Reidemeister move, the isomorphism constructed in \cite{KR2,Krasner-integral} commutes with multiplications by variables assigned to marked points that are outside the interior of the part of $B$ affected by this local marking change or braid-like Reidemeister move. By the definition of markings of closed braids, there is always such a marked point on each component of $B$. This implies that the isomorphisms constructed in \cite{KR2,Krasner-integral} are isomorphisms of $R_B$-modules. Again, we leave details to the reader.

\subsection{Rasmussen spectral sequence} In \cite{Ras2}, Rasmussen proved that the spectral sequence associated to the column filtration of the double complex $(H(C(B),d_+),d_-,d_v)$ converges to the $\slmf(2)$ Khovanov homology. Of course, the proof in \cite{Ras2} is over $\Q$. But, again, this proof adapts to the base field $\zed_2$ without essential changes.

\begin{theorem}\cite{Ras2}\label{thm-Ras-ss}
Suppose that $B$ is a closed braid and $R_B$ is the polynomial ring over $\zed_2$ generated by components of $B$, graded so that each component of $B$ is homogeneous of degree $2$. Denote by $H_2(B)$ the Khovanov homology of $B$ over $\zed_2$. Then, the spectral sequence $\{E_r(B)\}$ associated to the column filtration of the double complex $(H(C(B),d_+),d_-,d_v)$ satisfies:
\begin{enumerate}
  \item $\{E_r(B)\}$ is a spectral sequence of $R_B$-modules.
	\item Each page of $\{E_r(B)\}$ inherits the horizontal and the vertical gradings of $C(B)$. It also inherits the $\slmf(2)$ quantum grading of $C(B)$ given by the degree function $\deg_x + 3 \deg_h$, where $\deg_x$ and $\deg_h$ are the degree functions of the $x$- and the horizontal gradings.
	\item $E_1(B) \cong H(B)$ as $R_B$-modules, where the isomorphism preserves the horizontal, vertical and $\slmf(2)$ quantum gradings.
	\item $E_\infty(B) \cong H_2(B)$ as $R_B$-modules, where the quantum grading of $H_2(B)$ coincide with the $\slmf(2)$ quantum grading of $E_\infty(B)$, and the homological grading of $H_2(B)$ coincides with the grading on $E_\infty(B)$ given by the degree function $\deg_v-\deg_h$, where $\deg_v$ and $\deg_h$ are the degree functions of the vertical and the horizontal gradings of $E_\infty(B)$.
	\item Under the identification $E_1(B) \cong H(B)$, the differential $d_1$ of the page $E_1(B)$ is the differential $d_-$ of $H(B)$, which preserves the vertical and the $\slmf(2)$ quantum gradings, and lowers the horizontal grading by $2$. For all $k\geq 2$, the differential $d_k$ of the pages $E_k(B)$ strictly lowers the vertical grading.
\end{enumerate}
\end{theorem}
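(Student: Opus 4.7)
The plan is to mimic Rasmussen's original argument in \cite{Ras2}, checking at each step that the construction is already defined over $\zed_2$ so that nothing implicitly requires inverting $2$. First I would pass to the bicomplex $(H(C(B),d_+), d_-, d_v)$ obtained by taking $d_+$-homology of the triple complex $C(B)$, and filter by columns, i.e., by $-\deg_h$, so that $d_v$ preserves the filtration while $d_-$ lowers it by one step. The standard spectral sequence of a filtered complex then yields $\{E_r(B), d_r\}$ with $E_0$ equal to the associated graded, $d_0 = d_v$, and
\[
E_1 = H(H(C(B),d_+), d_v) = H(B),
\]
with $d_1$ induced by $d_-$; this establishes item (3) and the first part of item (5).

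Next I would verify the grading claims in (2). The horizontal and vertical gradings clearly descend to every page. The $\slmf(2)$ quantum grading $q := \deg_x + 3\deg_h$ is preserved by $d_-$ (which decreases $\deg_h$ by $2$ and raises $\deg_x$ by $6$, for a net change of zero) and by $d_v$ (which preserves both $\deg_x$ and $\deg_h$), so $q$ descends to every page. The identification of the homological grading of $H_2(B)$ with $\deg_v - \deg_h$ on $E_\infty(B)$ will fall out of the identification in (4).

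The main obstacle is (4), the identification of $E_\infty(B)$ with the Khovanov homology $H_2(B)$. Here I would follow Rasmussen's local analysis crossing by crossing: the vertical complex at each crossing has two terms $\hat{C}(\Gamma_0)$ and $\hat{C}(\Gamma_1)$ connected by the $\chi$-maps of Lemma \ref{lemma-def-chi}, and one must show that the total homology of this local piece of the triple complex agrees, over $\zed_2$, with the Khovanov cube of resolutions built from the Frobenius algebra $\zed_2[x]/(x^2)$. Globally, since $C(B)$ is finite in all three directions, the filtration is bounded, so convergence is automatic; the contentful part is matching the two cubes of resolutions together with their differentials, which reduces to local MOY evaluations. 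The final part of (5), that $d_k$ strictly lowers the vertical grading for $k\geq 2$, then follows from the shape of the filtration: a higher differential on $E_k$ is represented by a zigzag of $d_v^{-1} d_-$ whose net effect on $\deg_v$ is strictly negative because $d_-$ preserves $\deg_v$ while $d_v$ raises it.

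The $R_B$-module structure and invariance under Markov moves are essentially formal. The action of $R_B$ commutes with $d_+$, $d_-$, and $d_v$ on the chain level (by Lemma \ref{lemma-quotient-base-ring} and the construction in Appendix \ref{sec-HOMFLYPT}), so it descends to every page and commutes with every $d_r$; this gives (1). Invariance of the full spectral sequence then follows from the fact that the chain-level homotopy equivalences of \cite{KR2, Krasner-integral} that prove invariance of $H(B)$ respect all three differentials, hence induce isomorphisms of spectral sequences. I expect the real work to be systematically reverifying Rasmussen's local identifications in characteristic $2$: the homotopies relating Koszul factorizations and $\chi$-maps were written for $\Q$-coefficients, and one must check that each such identity survives mod $2$. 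Because all matrix factorizations and $\chi$-maps in this paper are already defined over $\zed_2$, I expect this to reduce to bookkeeping rather than to any genuinely new input.
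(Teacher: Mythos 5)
Your proposal takes essentially the same route as the paper, which offers no independent proof of this statement: it cites Rasmussen's argument in \cite{Ras2} (column filtration of the double complex $(H(C(B),d_+),d_-,d_v)$, identification of $E_1$ with $H(B)$ and of $E_\infty$ with the $\slmf(2)$ homology, plus grading and $R_B$-module bookkeeping) and observes that it adapts to $\zed_2$ without essential changes. Your grading checks are also correct ($\deg_x+3\deg_h$ is killed by neither $d_v$ nor $d_-$, and the zigzag description shows $d_k$ lowers $\deg_v$ by $2(k-1)>0$ for $k\geq 2$), so the proposal is fine.
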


Recently, Naisse and Vaz proved in \cite{Naisse-Vaz-Ras-ss} that the Rasmussen spectral sequence collapses at its $E_2$-page. For our purpose though, we do not need this general result. This is because that, if the HOMFLYPT homology of a link is isomorphic to that of an unlink, then its Rasmussen spectral sequence collapses at its $E_2$-page for simple grading reasons.

\begin{example}\label{eg-unlink-computation}
Let $U^{\sqcup m}$ be the $m$-strand closed braid with no crossings. This is of course a diagram of the $m$-component unlink. We mark $U^{\sqcup m}$ by putting a single marked points on each component of $U^{\sqcup m}$. Denote by $x_j$ the variable assigned to the marked point on the $j$-th component. Then $R_{U^{\sqcup m}}=\zed_2[x_1,\dots,x_m]$ and
\[
C(U^{\sqcup m}) = K_{R_{U^{\sqcup m}}}\left(%
\begin{array}{cc}
  x_1^2 & 0 \\
	\cdots & \cdots \\
  x_m^2 & 0
\end{array}%
\right)\{-m\}_v\{m\}_h.
\]
$d_v$ and $d_+$ both vanish for this complex. So $H(U^{\sqcup m}) \cong C(U^{\sqcup m})$ as $\zed^3$-graded $R_B$-modules. In other words, as $\zed$-graded $R_B$-modules, 
\[
	H^{i,j}(U^{\sqcup m}) \cong \begin{cases}
	R_{U^{\sqcup m}}^{\oplus \bn{m}{l}}\{2l\}_x & \text{if } i=m-2l, ~0\leq l \leq m, \text{ and }  j=-m, \\
	0 & \text{otherwise,}
	\end{cases}
\]
where $H^{i,j}(U^{\sqcup m})$ is the $R_{U^{\sqcup m}}$-submodule of $H(U^{\sqcup m})$ with horizontal grading $i$ and vertical grading $j$.

The complex $(H(U^{\sqcup m}), d_-)$ is the Koszul complex (with non-standard gradings) over $R_{U^{\sqcup m}}=\zed_2[x_1,\dots,x_m]$ for the sequence $\{x_1^2,\dots,x_m^2\}$, which is regular in $R_{U^{\sqcup m}}$. So 
\[
E_2^{i,j}(U^{\sqcup m})\cong H^{i,j}(H(U^{\sqcup m}), d_-) \cong \begin{cases}
	\zed_2[x_1,\dots,x_m]/(x_1^2,\dots,x_m^2)\{2m\}_x & \text{if} ~i=j=-m, \\
	0 & \text{otherwise,}
	\end{cases}
\]
where $E_2^{i,j}(U^{\sqcup m})$ is the $R_{U^{\sqcup m}}$-submodule of $E_2(U^{\sqcup m})$ with horizontal grading $i$ and vertical grading $j$. Since $d_k$ strictly lowers the vertical grading for $k \geq 2$, we have that $d_k=0$ for $k \geq 2$. So $E_\infty(U^{\sqcup m}) \cong E_2(U^{\sqcup m})$. Thus,
\[
H_2^i(U^{\sqcup m}) \cong \begin{cases}
	\zed_2[x_1,\dots,x_m]/(x_1^2,\dots,x_m^2)\{-m\}_q & \text{if} ~i=0, \\
	0 & \text{otherwise,}
	\end{cases}
\]
where $H_2^i(U^{\sqcup m})$ is the $R_{U^{\sqcup m}}$-submodule of $H_2(U^{\sqcup m})$ with homological grading $i$, and $\{-m\}_q$ means shifting the quantum grading down by $m$. Here, note the difference between the $x$-grading and the $\slmf(2)$ quantum grading.

In particular, when $m=1$, $U = U^{\sqcup 1}$ is the unknot. And
\begin{eqnarray*}
H^{i,j}(U) & \cong & {\begin{cases}
	\zed_2[x_1] & \text{if} ~i=1,~j=-1, \\
	\zed_2[x_1]\{2\}_x & \text{if} ~i=-1,~j=-1, \\
	0 & \text{otherwise,}
	\end{cases}} \\
H_2^i(U) & \cong & {\begin{cases}
	\zed_2[x_1]/(x_1^2)\{-1\}_q & \text{if} ~i=0, \\
	0 & \text{otherwise.}
	\end{cases}}
\end{eqnarray*}
\end{example}

\end{document}